 \newtheorem{thm}{Theorem}[section]
 \newtheorem{cor}[thm]{Corollary}
 \newtheorem{lem}[thm]{Lemma}
 \newtheorem{prop}[thm]{Proposition}
 \theoremstyle{definition}
 \newtheorem{defn}[thm]{Definition}
 \theoremstyle{remark}
 \numberwithin{equation}{section}
 \newcommand{\norm}[1]{\left\Vert#1\right\Vert}
 \newcommand{\C}{\mathbb{C}}
\begin{document}

\title[]
 {Banach Spaces with respect to Operator-Valued Norms}

\author{ Yun-Su Kim }



\address{Yun-Su Kim, Department of Mathematics, University of
Toledo, Toledo, Ohio, 43606, USA }

\keywords{Operator-valued norms; $L(H)$-valued norms;
$L(H)$-valued norms preserving Cauchy sequences; Banach spaces
with respect to $L(H)$-valued norms; Hilbert spaces with respect
to $L(H)$-valued inner products }


\commby{Daniel J. Rudolph}


\begin{abstract}We introduce the notions of $L(H)$-valued norms and Banach spaces with respect to $L(H)$-valued norms.
In particular, we introduce Hilbert spaces with respect to
$L(H)$-valued inner products. In addition, we provide several
fundamental examples of Hilbert spaces with respect to
$L(H)$-valued inner products.

\end{abstract}

\maketitle

\section*{Introduction}
A nonnegative real-valued function $\norm{\emph{ }}$ defined on a
vector space, called  a \emph{norm}, is a very fundamental and
important project in analysis.\vskip0.1cm

Let $(X,\parallel$ $\parallel_{X})$ be a normed linear space which
is closed relative to the topology induced by the metric defined
by its norm. In this paper, $\textbf{B}(\neq\{0\})$ and
$H(\neq\{0\})$ always denote a Banach space and a Hilbert space,
respectively. A lot of results for scalar-valued functions have
been extended to vector-valued ones. As an example, in \cite{K},
W. L. Paschke introduced an $A$-valued inner product for a
$C^*$-algebra $A$.

In section 1, we provide the notion and fundamental properties of
$L(H)$-valued norms on a normed linear space $X$ by using positive
operators. In theorem \ref{b1}, we characterize $L(H)$-valued
norms preserving Cauchy sequences.

In section 2, we introduce Banach spaces with respect to
$L(H)$-valued norms, and in Theorem \ref{33}, we characterize a
convergent sequence $\{x_{n}\}_{n=0}^{\infty}$ in $X$ with respect
to the norm metric defined by an $L(H)$-valued norm $F$ on $X$.

We discuss a direct sum of Banach spaces with respect to
$L(H)$-valued norms, and prove that it is also a Banach space with
respect to an $L(H)$-valued norm (Theorem \ref{58}). Furthermore,
for a Banach space $\textbf{B}$ with respect to an operator-valued
norm $F:\textbf{B}\rightarrow{L(H)}$, we introduce the notion of
$L(H)$-dual space of $\textbf{B}$, denoted by
$\textbf{B}^{*}_{L(H)}$, and show that there is a natural
$L(H)$-valued norm, called $F_a$, on the quotient space
$\textbf{B}^{*}_{L(H)}/{M_{a}^{\perp}}$ making it into a Banach
space with respect to $F_a$, where
$M_{a}^{\perp}=\{\phi\in{\textbf{B}^{*}_{L(H)}}:\phi(a)=0\}(a\in{\textbf{B}\setminus\{0\}})$
(Theorem \ref{57}).

In section 3, we provide a few of example of Hilbert spaces with
respect to $L(H)$-valued inner product.

As a fundamental example, every Hilbert $L(H)$-module (\cite{C})
is a Hilbert space with respect to an $L(H)$-valued inner product.
In this paper, we discuss about Hilbert spaces with respect to an
$L(H)$-valued inner product, instead of Hilbert modules.

In theorem \ref{59}, we show that any Hilbert space is a Hilbert
space with respect to $L(\C)$-valued inner product, and, in
theorem \ref{22}, $L^{\infty}$ is also a Hilbert space with
respect to an $L(L^{2})$-valued inner product.

Furthermore, in theorem \ref{60}, we prove that $C(Y)$ is a
Hilbert space with respect to an $L(L^{2}(\mu))$-valued inner
product where $(Y,\Omega,\mu)$ is a $\sigma$-finite measure space
and $L^{2}(Y,\Omega,\mu)=L^{2}(\mu)$.

\section{$L(H)$-valued Norms}

\subsection{$L(H)$-valued Norms}\label{b2}
Let $\C$ denote the set of complex numbers and $L(\textbf{B})$
denote the set of bounded operators on a Banach space
$\textbf{B}$. For $T\in{L(\textbf{B})}$, the norm of $T$ is
defined by $\norm{T}=\sup_{\norm{x}=1}\norm{Tx}$.

Let $(X,\parallel$ $\parallel_{X})$ be a normed linear space which
is closed relative to the topology induced by the metric defined
by its norm.

 We introduce the notion of
$L(H)$-valued norms on a normed linear space $X$.
To introduce the notion of $L(H)$-valued norms on $X$, we use
positive operators.

Throughout this paper, $H$ will denote a nontrivial Hilbert space
and for any vectors $h_{1}$ and $h_{2}$ in $H$, $(h_{1},h_{2})$
denotes the inner product of $h_{1}$ and $h_{2}$.

\begin{defn}\label{1}

(i) If a function $F:X\rightarrow{L(H)}$ has the following
properties :
\begin{enumerate}

\item For any $x\in{X}$, $F(x)\geq{0}$, i.e. $F(x)$ is a positive
operator.

\item (Triangle Inequality) $F(x+y)\leq{F(x)+F(y)}$ for any $x$
and $y$ in $X$.

\item $F(\lambda{x})=|\lambda|F(x)$ for any $\lambda\in{\C}$.

\item $F(x)=0$ if and only if $x=0$, \end{enumerate} then $F$ is
said to be an $L(H)$-\emph{valued norm } defined on $X$, and
$(X,F)$ is said to be $L(H)$\emph{-valued normed linear space}.

\noindent (ii) If $\sup_{\norm{x}=1}\norm{F(x)}<\infty$, then $F$
is said to be \emph{bounded}.

\noindent (iii) An $L(H)$\emph{-valued metric} on a set $X$ is a
function $d:X\times{X}\rightarrow{L(H)}$ such that for all $x,y$
and $z$ in ${X}$,
\begin{enumerate}
\item $d(x,y)$ is a positive operator,

\item $d(x,y)=0$ if and only if $x=y$,

\item $d(x,y)=d(y,x)$, and

\item $d(x,z)\leq{d(x,y)+d(y,z)}$.

\end{enumerate}

\end{defn}
Let $\mathbb{T}$ be the unit circle in the complex plane. As an
example of such projects, define
$F:L^{\infty}(\mathbb{T})\rightarrow{L(L^{2}(\mathbb{T}))}$ by
\[F(g)=m_{g},\]
where $L^{p}(\mathbb{T})$ is the Lebesgue space with respect to
the Lebesgue measure $\mu$ on $\mathbb{T}$ such that
$\mu(\mathbb{T})=1$, and
$m_{g}:L^{2}(\mathbb{T})\rightarrow{L^{2}(\mathbb{T})}$ is the
bounded operator defined by
\begin{equation}\label{39}m_{g}(f)=|g|\cdot{f},\end{equation}
for $f\in{L^{2}(\mathbb{T})}$. Then clearly, $F$ is an
$L(L^{2}(\mathbb{T}))$-valued norm on $L^{\infty}(\mathbb{T})$.
This multiplication operator $m_{g}$ is a fundamental example.

 If $(X;F)$ is an $L(H)$-valued normed vector
space, the function $d(x,y)=F(x-y)$ is an $L(H)$-valued metric on
$X$.




\begin{lem}
Let $F:H\rightarrow{L(H)}$ be an $L(H)$-valued norm on $H$ and
$T\in{L(H)}$ be an injective operator.

Then $F\circ{T}$ is an $L(H)$-valued norm on $H$.

\end{lem}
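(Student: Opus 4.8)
The plan is to verify that $F \circ T$ satisfies each of the four axioms in Definition~\ref{1}(i), using that $F$ is already an $L(H)$-valued norm and that $T$ is a bounded linear injection. Write $G = F \circ T$, so $G(x) = F(Tx)$ for $x \in H$. First I would observe that each value $G(x) = F(Tx)$ is by definition a value of $F$, hence automatically a positive operator; this disposes of axiom~(1) immediately.

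For the triangle inequality~(2), I would use the linearity of $T$: for $x, y \in H$ we have $G(x+y) = F(T(x+y)) = F(Tx + Ty)$, and applying the triangle inequality for $F$ gives $F(Tx + Ty) \leq F(Tx) + F(Ty) = G(x) + G(y)$. The homogeneity axiom~(3) is handled the same way: $G(\lambda x) = F(T(\lambda x)) = F(\lambda\, Tx) = |\lambda|\, F(Tx) = |\lambda|\, G(x)$, again using linearity of $T$ together with the corresponding property of $F$.

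The only axiom that genuinely uses injectivity is~(4), the definiteness condition, and this is where I expect the one real (though mild) point of care. The direction $x = 0 \implies G(x) = 0$ is trivial, since $T0 = 0$ and $F(0) = 0$. For the converse, suppose $G(x) = F(Tx) = 0$. Since $F$ is an $L(H)$-valued norm, property~(4) for $F$ forces $Tx = 0$; then injectivity of $T$ yields $x = 0$. This is the single step where the hypothesis on $T$ is essential, and it is precisely what guarantees $G$ separates points.

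Assembling these four verifications shows that $G = F \circ T$ meets all the requirements of an $L(H)$-valued norm on $H$, completing the proof. The argument is entirely routine; there is no substantive obstacle, the only thing to keep track of being the correct use of linearity of $T$ in axioms~(2) and~(3) and of injectivity of $T$ in axiom~(4).
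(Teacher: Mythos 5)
Your proof is correct and follows essentially the same route as the paper's: verify each of the four axioms, using linearity of $T$ for the triangle inequality and homogeneity, and injectivity of $T$ only for the definiteness condition. The only (immaterial) difference is that for positivity the paper factors $F(Th)=S^{*}S$ and computes $(G(h)k,k)=\norm{Sk}^{2}\geq 0$, whereas you simply note that $G(x)=F(Tx)$ is a value of $F$ and hence positive by axiom (1) — which is a cleaner way to say the same thing.
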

\begin{proof}
Let $G=F\circ{T}$ and $h\in{H}$ be given. Since $F(Th)$ is a
positive operator, $F(Th)=S^{\ast}S$ for some $S\in{L(H)}$. Then
\begin{center}$(G(h)k,k)=\norm{Sk}^{2}\geq{0}$ for $k\in{K}$.\end{center} Thus $G(h)\geq{0}$ for any
$h\in{H}$.

Since $T$ is linear and $F$ is an $L(H)$-valued norm on $H$,
$G(h+k)=F(Th+Tk)\leq{F(Th)+F(Tk)}={G(h)+G(k)}$ for $h$ and $k$ in
$H$, and $G(\lambda{h})=F(\lambda{Th})=|\lambda|G(h)$ for any
$\lambda\in{\C}$.

Let $G(x)=0$ for some $x\in{H}$. Then $F(Tx)=0$ and so $Tx=0$,
since $F$ is an $L(H)$-valued norm. The injectivity of $T$ implies
that $x=0$. Conversely, if $x=0$, then clearly $G(x)=0$.
Therefore, $G=F\circ{T}$ is an $L(H)$-valued norm.

\end{proof}


\begin{prop}\label{56}
Let $F:X\rightarrow{L(H)}$ be an $L(H)$-valued norm on $X$. Then
\begin{enumerate}
\item $\norm{F(x+y)}\leq\norm{F(x)}+\norm{F(y)}$ for any $x$ and
$y$ in $X$.

\item $\norm{F(x)-F(y)}\leq{\norm{F(x-y)}}$ for any $x$ and $y$ in
$X$.\end{enumerate}
\end{prop}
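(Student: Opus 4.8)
The plan is to reduce both inequalities to the monotonicity of the operator norm on positive operators, together with the triangle inequality for $F$ from property (2) of Definition \ref{1}(i). The single fact I would isolate first is the numerical-range formula: for a self-adjoint $A\in L(H)$ one has $\norm{A}=\sup_{\norm{h}=1}|(Ah,h)|$, and in particular for a positive operator $A$ one has $\norm{A}=\sup_{\norm{h}=1}(Ah,h)$. An immediate consequence, which serves as the workhorse, is monotonicity of the norm: if $0\leq A\leq B$, then $(Ah,h)\leq(Bh,h)$ for every $h$, and taking the supremum over unit vectors gives $\norm{A}\leq\norm{B}$.

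For part (1), observe that $F(x)$, $F(y)$, and $F(x+y)$ are all positive by property (1) of Definition \ref{1}(i), while the triangle inequality (property (2)) gives $0\leq F(x+y)\leq F(x)+F(y)$. Applying monotonicity yields $\norm{F(x+y)}\leq\norm{F(x)+F(y)}$, and the ordinary subadditivity of the operator norm finishes the estimate, since $\norm{F(x)+F(y)}\leq\norm{F(x)}+\norm{F(y)}$.

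Part (2) is where the real work lies, and I expect it to be the main obstacle: the operator $A:=F(x)-F(y)$ need not be positive, so monotonicity does not apply to it directly. The idea is to sandwich $A$ between $\pm F(x-y)$. Writing $x=(x-y)+y$ and applying the triangle inequality gives $F(x)\leq F(x-y)+F(y)$, that is, $A\leq F(x-y)$. Interchanging the roles of $x$ and $y$, and using $F(y-x)=F(x-y)$ (property (3) of Definition \ref{1}(i) with $\lambda=-1$), gives $F(y)-F(x)\leq F(x-y)$, that is, $-F(x-y)\leq A$. Thus $-B\leq A\leq B$ with $B:=F(x-y)\geq 0$. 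Since $A$ is a difference of positive operators it is self-adjoint, so for every unit vector $h$ the two-sided bound yields $|(Ah,h)|\leq(Bh,h)$. Taking the supremum over unit vectors and invoking the self-adjoint norm formula gives $\norm{A}=\sup_{\norm{h}=1}|(Ah,h)|\leq\sup_{\norm{h}=1}(Bh,h)=\norm{B}$, which is precisely $\norm{F(x)-F(y)}\leq\norm{F(x-y)}$. The only subtlety to be careful about is justifying that a positive operator in the sense of Definition \ref{1}(i)(1) is indeed self-adjoint, so that the self-adjoint norm characterization is legitimate; this is automatic from the standard convention that $A\geq 0$ means $(Ah,h)\geq 0$ for all $h$, which forces self-adjointness on a complex Hilbert space.
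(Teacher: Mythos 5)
Your proposal is correct and follows essentially the same route as the paper: part (1) via $0\leq F(x+y)\leq F(x)+F(y)$ and monotonicity plus subadditivity of the operator norm, and part (2) by sandwiching $F(x)-F(y)$ between $\pm F(x-y)$ and passing to $|((F(x)-F(y))h,h)|\leq (F(x-y)h,h)$ over unit vectors. Your explicit appeal to the self-adjoint numerical-range formula $\norm{A}=\sup_{\norm{h}=1}\abs{(Ah,h)}$ merely spells out the step the paper leaves implicit in its final sentence.
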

\begin{proof}
(1) For any $x$ and $y$ in $X$, by triangle inequality,
$0\leq{F(x+y)}\leq{F(x)+F(y)} $. It follows that
\begin{center}$\norm{F(x+y)}\leq\norm{F(x)+F(y)}\leq\norm{F(x)}+\norm{F(y)}.$\end{center}
\vskip0.2cm

(2) By triangle inequality, for any $x$ and $y$ in $X$,
\begin{center}$F(x)\leq{F(x-y)+F(y)}$ or $F(x)-F(y)\leq{F(x-y)}$.\end{center} Thus for
any $h\in{H}$,
\begin{equation}\label{2}
((F(x)-F(y))h,h)\leq{(F(x-y)h,h)}. \end{equation} Similarly, we
have $F(y)-F(x)\leq{F(y-x)}=|-1|F(x-y)=F(x-y)$. Thus for any
$h\in{H}$,
\begin{equation}\label{3}
((F(y)-F(x))h,h)\leq{(F(x-y)h,h)}.
\end{equation}
Inequalities (\ref{2}) and (\ref{3}) imply that
\begin{equation}\label{4}
|((F(x)-F(y))h,h)|\leq|(F(x-y)h,h)|.
\end{equation}

Since $F(x)$ and $F(y)$ are positive operators, by inequality
(\ref{4}), we conclude that
\begin{center}$\norm{F(x)-F(y)}\leq\norm{F(x-y)}.$\end{center}
\end{proof}
\begin{prop}\label{7}
Let $F:X\rightarrow{L(H)}$ be an $L(H)$-valued norm on $X$. Then
$F$ is continuous at 0 if and only if $F$ is a continuous function
on $X$.
\end{prop}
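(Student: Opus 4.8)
The plan is to prove the nontrivial implication---that continuity at $0$ forces continuity everywhere---and to dispose of the converse as an immediate consequence of the definition. Since $0\in X$, if $F$ is continuous on all of $X$ then it is in particular continuous at $0$, so that direction requires no argument.

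For the forward direction, the key observation is that Proposition \ref{56}(2) already supplies a reverse-triangle estimate, $\norm{F(x)-F(y)}\leq\norm{F(x-y)}$, which lets me transfer the behaviour of $F$ near an arbitrary point to its behaviour near $0$. First I would record that $F(0)=0$: taking $\lambda=0$ in property (3) of Definition \ref{1} gives $F(0)=|0|F(x)=0$. Thus continuity of $F$ at $0$ means precisely that for every $\eps>0$ there is a $\delta>0$ with $\norm{z}_{X}<\delta\Rightarrow\norm{F(z)}<\eps$.

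Now fix an arbitrary $x_{0}\in X$ and let $\eps>0$. Choosing $\delta>0$ as above, I would take any $x$ with $\norm{x-x_{0}}_{X}<\delta$ and set $z=x-x_{0}$, so that $\norm{z}_{X}<\delta$ and hence $\norm{F(x-x_{0})}=\norm{F(z)}<\eps$. Applying Proposition \ref{56}(2) with $y=x_{0}$ then yields
\[
\norm{F(x)-F(x_{0})}\leq\norm{F(x-x_{0})}<\eps,
\]
which is exactly continuity of $F$ at $x_{0}$. Since $x_{0}$ was arbitrary, $F$ is continuous on $X$.

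There is no serious obstacle here: all the analytic content is packaged in the inequality $\norm{F(x)-F(y)}\leq\norm{F(x-y)}$ of Proposition \ref{56}(2), and the argument is the standard reduction of continuity at a general point to continuity at the origin that works for ordinary norms. The only point worth flagging is the need to verify $F(0)=0$, so that continuity ``at $0$'' can be stated in terms of $\norm{F(z)}$ rather than $\norm{F(z)-F(0)}$; this is immediate from homogeneity (property (3) of Definition \ref{1}).
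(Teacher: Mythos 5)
Your proof is correct and follows essentially the same route as the paper: both arguments reduce continuity at an arbitrary point to continuity at $0$ via the inequality $\norm{F(x)-F(y)}\leq\norm{F(x-y)}$ from Proposition \ref{56}(2). The only cosmetic difference is that you phrase the argument with $\eps$--$\delta$ while the paper uses nets, and you make the (worthwhile) observation $F(0)=0$ explicit.
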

\begin{proof}
Suppose that $F$ is continuous at 0. Let $x\in{X}$ and
$\{x_{\alpha}\}_{\alpha\in{A}}$ be a net in $X$ converging to $x$.
By Proposition \ref{56}, for any $\alpha\in{A}$,
\begin{equation}\label{a}
\norm{F(x_{\alpha})-F(x)}\leq\norm{F(x_{\alpha}-x)}.\end{equation}

Since $F$ is continuous at 0 and $F(0)=0$, (\ref{a}) implies that
$\{F(x_{\alpha})\}_{\alpha\in{A}}$ converges to $F(x)$.
Since $x\in{X}$ is an arbitrary point in $X$, we conclude that $F$
is continuous on $X$. The converse is clear.
\end{proof}
\subsection{\textbf{$L(H)$-valued Norms
Preserving Cauchy Sequences}}\label{b}
In Section \ref{b2}, we provided an $L(H)$-valued norm and, in
this section, we provide a notion of $L(H)$-valued norms
preserving Cauchy sequences.
\begin{defn}\label{b}
Let $\{x_{n}\}_{n=1}^{\infty}$ be a Cauchy sequence in $X$ and
$F:X\rightarrow{L(H)}$ be an $L(H)$-valued norm on $X$.

If $\{F(x_{n})\}_{n=1}^{\infty}$ is also a Cauchy sequence in
$L(H)$, then $F$ is said to be an $L(H)$-valued norm
\emph{preserving Cauchy sequences}.

\end{defn}

\begin{thm}\label{b1}
Let $F:X\rightarrow{L(H)}$ be a bounded $L(H)$-valued norm on $X$.
Then, $F$ preserves Cauchy sequences
 if and only if $F$ is continuous at 0.
\end{thm}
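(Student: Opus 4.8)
The plan is to establish the two implications separately. The forward direction---if $F$ is continuous at $0$ then $F$ preserves Cauchy sequences---is the more routine one and rests entirely on the reverse triangle estimate of Proposition \ref{56}(2), namely $\norm{F(x)-F(y)}\le\norm{F(x-y)}$. Given a Cauchy sequence $\{x_{n}\}_{n=1}^{\infty}$ in $X$, I would fix $\eps>0$ and use continuity at $0$ (together with $F(0)=0$) to produce $\delta>0$ with $\norm{F(z)}<\eps$ whenever $\norm{z}_{X}<\delta$. Since $\{x_{n}\}$ is Cauchy there is an $N$ with $\norm{x_{n}-x_{m}}_{X}<\delta$ for all $n,m\ge N$, and then Proposition \ref{56}(2) yields $\norm{F(x_{n})-F(x_{m})}\le\norm{F(x_{n}-x_{m})}<\eps$. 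Hence $\{F(x_{n})\}$ is Cauchy in $L(H)$, which is precisely the assertion that $F$ preserves Cauchy sequences.

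For the reverse direction---if $F$ preserves Cauchy sequences then $F$ is continuous at $0$---I would argue by contraposition, exploiting that continuity of a map between metric spaces is equivalent to sequential continuity. Suppose $F$ fails to be continuous at $0$. Then there is a sequence $\{z_{k}\}$ in $X$ with $z_{k}\To 0$ but $F(z_{k})\not\To F(0)=0$; passing to a subsequence, I may assume there is a fixed $\eps_{0}>0$ with $\norm{F(z_{k})}\ge\eps_{0}$ for every $k$.

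The decisive step is to manufacture from $\{z_{k}\}$ a genuine Cauchy sequence whose image under $F$ fails to be Cauchy. To this end I would interleave the $z_{k}$ with zeros, setting $y_{2k-1}=z_{k}$ and $y_{2k}=0$. Because both the odd-indexed and the even-indexed subsequences converge to $0$, the sequence $\{y_{n}\}$ converges to $0$ in $X$ and is therefore Cauchy. On the other hand $F(y_{2k-1})=F(z_{k})$ while $F(y_{2k})=0$, so consecutive images satisfy $\norm{F(y_{2k-1})-F(y_{2k})}=\norm{F(z_{k})}\ge\eps_{0}$. Thus $\{F(y_{n})\}$ is not Cauchy, contradicting the hypothesis that $F$ preserves Cauchy sequences. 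This contradiction forces $F$ to be continuous at $0$.

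The genuine obstacle here is conceptual rather than computational: recognizing that the ``interleaving with zeros'' device converts a failure of continuity at $0$ into a failure of the Cauchy-preservation property. Once that construction is in hand, both implications are short. I would also note that the boundedness hypothesis on $F$ is available but, as the sketch shows, is not actually required for either direction; the reverse triangle inequality of Proposition \ref{56}(2) and the sequential characterization of continuity carry the entire argument.
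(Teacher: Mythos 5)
Your proof is correct, and the easy direction (continuity at $0$ implies preservation of Cauchy sequences, via Proposition \ref{56}(2)) is exactly what the paper does. For the substantive direction, however, you take a genuinely different route. The paper argues directly: given $x_n\To 0$, the preservation hypothesis plus completeness of $L(H)$ produce a limit $T$ of $\{F(x_n)\}$, and $T=0$ is then forced by the homogeneity identity $F(y_n)=\norm{y_n}_X\,F\bigl(y_n/\norm{y_n}_X\bigr)$ together with the bound $M_F=\sup_{\norm{x}=1}\norm{F(x)}<\infty$ --- this is precisely where the boundedness hypothesis is consumed (after first disposing of the case where infinitely many $x_n$ vanish, which is needed before one can normalize). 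You instead argue by contraposition and interleave a bad sequence $z_k\To 0$ with zeros to manufacture a convergent (hence Cauchy) sequence whose image has consecutive terms separated by $\eps_0$, so the image is not Cauchy. Your argument buys two things: it never invokes completeness of $L(H)$, and, as you observe, it does not use boundedness of $F$ at all, so it actually proves the theorem under a weaker hypothesis than stated; the paper's argument buys a more explicit, computation-driven identification of the limit, at the cost of genuinely needing $M_F<\infty$. Both are valid; yours is the more economical and more general of the two.
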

\begin{proof}
Suppose that $F$ preserves Cauchy sequences and
$\{x_{n}\}_{n=1}^{\infty}$ is a sequence in $X$ such that
\begin{equation}\label{s1}\lim_{n \rightarrow \infty}{x_{n}}=0.\end{equation} Then $\{x_{n}\}_{n=1}^{\infty}$ is a Cauchy
sequence in $X$. Since $F$ preserves Cauchy sequences,
$\{F(x_{n})\}_{n=1}^{\infty}$ is also a Cauchy sequence.
Because $L(H)$ is a Banach space, there is an operator
$T\in{L(H)}$ such that
\begin{equation}\label{s}\lim_{n \rightarrow \infty}{F(x_{n})}=T.\end{equation}

If $A=\{n:x_{n}=0\}$ is infinite, then for $n_{k}\in{A}$
$(k=1,2,3,\cdot\cdot\cdot)$, \begin{center}$\lim_{k \rightarrow
\infty}{F(x_{n_{k}})}=F(0)=0$\end{center} and so by equation
(\ref{s}), $T=0$. \vskip0.2cm If $A=\{n:x_{n}=0\}$ is not
infinite, then there is a nonzero subsequence
$\{y_{n}\}_{n=1}^{\infty}$ of $\{x_{n}\}_{n=1}^{\infty}$ such that
\begin{equation}\label{s2}\lim_{n \rightarrow \infty}{y_{n}}=0\texttt{ and }\lim_{n \rightarrow \infty}{F(y_{n})}=T.
\end{equation}
By equation (\ref{s2}),
\begin{equation}\label{s3}\lim_{n \rightarrow
\infty}\norm{F(y_{n})}=\norm{T}.\end{equation} Let
$M_{F}=\sup_{\norm{x}=1}{\norm{F(x)}}(<\infty)$ (Note that $F$ is
bounded). By property (3) of Definition \ref{1} and equation
(\ref{s3}), $\norm{T}=\lim_{n \rightarrow
\infty}\norm{y_{n}}_{X}\norm{F(\frac{y_{n}}{\norm{y_{n}}_{X}})}\leq\lim_{n\rightarrow
\infty}\norm{y_{n}}_{X}\cdot{M_F}=0$.

It follows that \begin{center}$\norm{T}=0$\end{center} and so
$T=0$. Thus $T\equiv{0}$ whether $A$ is infinite or not.

\noindent By equation (\ref{s}), $\lim_{n \rightarrow
\infty}{F(x_{n})}=0=F(0)$ which proves that $F$ is continuous at
0. \vskip0.2cm

Conversely, suppose that $F$ is continuous at 0. By Proposition
\ref{56} (2), it is easy to see that $F$ preserves Cauchy
sequences.
\end{proof}
From Proposition \ref{7}, we obtain the following result:
\begin{cor}
Let $F:X\rightarrow{L(H)}$ be a bounded $L(H)$-valued norm on $X$.
Then, $F$ preserves Cauchy sequences if and only if $F$ is
continuous on $X$.
\end{cor}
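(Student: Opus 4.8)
The plan is to obtain this corollary purely by chaining the two equivalences already established, since the hypotheses are exactly those needed to invoke both of the preceding results. The key observation is that ``$F$ is continuous at $0$'' is the common intermediate condition linking Theorem \ref{b1} and Proposition \ref{7}.

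First I would apply Theorem \ref{b1}. The corollary assumes that $F$ is a \emph{bounded} $L(H)$-valued norm on $X$, which is precisely the standing hypothesis of Theorem \ref{b1}; hence that theorem applies verbatim and yields the equivalence
\[
F \text{ preserves Cauchy sequences} \iff F \text{ is continuous at } 0.
\]
Next I would apply Proposition \ref{7}, which holds for any $L(H)$-valued norm on $X$ and in particular for our bounded $F$, to obtain
\[
F \text{ is continuous at } 0 \iff F \text{ is continuous on } X.
\]

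Finally, transitivity of logical equivalence combines these two biconditionals into the desired statement: $F$ preserves Cauchy sequences if and only if $F$ is continuous on $X$. There is essentially no obstacle here, as the argument is a formal composition of earlier results; the only point worth noting is a matching of hypotheses, namely that boundedness is genuinely required for the Theorem \ref{b1} half of the chain, whereas the Proposition \ref{7} half needs only that $F$ be an $L(H)$-valued norm. Since both are satisfied under the stated assumptions, the conclusion follows immediately.
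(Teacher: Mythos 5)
Your proposal is correct and matches the paper exactly: the corollary is stated immediately after the sentence ``From Proposition \ref{7}, we obtain the following result,'' so the intended proof is precisely the chaining of Theorem \ref{b1} (preserving Cauchy sequences $\iff$ continuity at $0$, under boundedness) with Proposition \ref{7} (continuity at $0$ $\iff$ continuity on $X$). Nothing further is needed.
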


\section{Banach Spaces with respect to $L(H)$-Valued Norms }
In this paper, $(X,F)$ always denote an $L(H)$-valued normed
linear space.

\begin{defn}
A sequence $\{x_{n}\}_{n=0}^\infty$ in $X$ is said to be a
\emph{Cauchy sequence with respect to F} if for every
$\epsilon>0$, there is a natural number $N(\epsilon)$ such that
for all $n,m\geq{N(\epsilon)}$, we have
\begin{equation}\label{30}F(x_{n}-x_{m})\leq\epsilon{I_{H}}\end{equation} where $I_H$ is the identity
operator on $H$. If $(X;F)$ is a $L(H)$-valued normed vector
space, then the function $d(x,y)=F(x-y)$ is a $L(H)$-valued metric
on $X$.\end{defn}

\begin{prop}\label{31}
Let $\{x_{n}\}_{n=0}^\infty$ be a sequence in an $L(H)$-valued
normed linear space $X$. Then the following statements are
equivalent;

(a) The sequence $\{x_{n}\}_{n=0}^\infty$ is a Cauchy sequence.

(b) For every $\epsilon>0$, there is a natural number
$N(\epsilon)$ such that for all $n,m\geq{N(\epsilon)}$, we have
$\norm{F(x_{n}-x_{m})}\leq\epsilon$.
\end{prop}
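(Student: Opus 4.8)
The claim is an equivalence between a Cauchy condition phrased with the operator inequality $F(x_n - x_m) \leq \epsilon I_H$ and one phrased with the scalar norm $\norm{F(x_n - x_m)} \leq \epsilon$. The plan is to prove both implications by exploiting the fact that each $F(x_n - x_m)$ is a positive operator, for which the operator bound $T \leq \epsilon I_H$ and the norm bound $\norm{T} \leq \epsilon$ are essentially interchangeable up to passing between $\epsilon$ and a harmless multiple of it.

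\emph{Direction (a) $\Rightarrow$ (b).} Suppose $\{x_n\}$ is Cauchy in the sense of Definition 2.1, so for every $\epsilon > 0$ there is $N(\epsilon)$ with $F(x_n - x_m) \leq \epsilon I_H$ for all $n,m \geq N(\epsilon)$. Writing $T = F(x_n - x_m)$, this means $(Th,h) \leq \epsilon (h,h) = \epsilon \norm{h}^2$ for all $h \in H$. Since $T$ is positive (hence self-adjoint), its norm is computed by $\norm{T} = \sup_{\norm{h}=1}(Th,h)$, and the displayed inequality gives $\norm{T} \leq \epsilon$ directly. So $\norm{F(x_n - x_m)} \leq \epsilon$ for $n,m \geq N(\epsilon)$, which is exactly (b).

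\emph{Direction (b) $\Rightarrow$ (a).} Conversely, assume (b): for every $\epsilon > 0$ there is $N(\epsilon)$ with $\norm{F(x_n - x_m)} \leq \epsilon$ for $n,m \geq N(\epsilon)$. For a positive operator $T$ the norm bound $\norm{T} \leq \epsilon$ forces $(Th,h) \leq \norm{T}\norm{h}^2 \leq \epsilon \norm{h}^2$ for all $h$, i.e. $T \leq \epsilon I_H$. Applying this with $T = F(x_n - x_m)$ yields $F(x_n - x_m) \leq \epsilon I_H$ for all $n,m \geq N(\epsilon)$, which is the Cauchy condition of Definition 2.1. Hence (a) holds.

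The only point requiring care — and the single fact the argument leans on — is the standard characterization $\norm{T} = \sup_{\norm{h}=1}(Th,h)$ valid for positive (self-adjoint) operators, together with the Cauchy--Schwarz-type bound $(Th,h) \leq \norm{T}\norm{h}^2$. Both are elementary, so I expect no genuine obstacle; the main thing to get right is simply to invoke positivity of $F(x_n - x_m)$, guaranteed by property (1) of Definition \ref{1}, at each step so that the operator inequality and the norm inequality translate cleanly into one another with the same $\epsilon$.
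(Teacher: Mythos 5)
Your proof is correct and follows essentially the same route as the paper: both directions reduce to the identity $\norm{T}=\sup_{\norm{h}=1}(Th,h)$ for the positive operator $T=F(x_n-x_m)$, which lets the operator inequality $T\leq\epsilon I_H$ and the norm bound $\norm{T}\leq\epsilon$ be translated into one another with the same $\epsilon$. No issues.
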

\begin{proof}
$(a)\Rightarrow(b)$ Since $F(x)\geq{0}$ for $x\in{X}$,
\begin{equation}\label{29}\norm{F(x)}=\texttt{sup}\{(F(x)h,h);h\in{H},
\norm{h}={1}\}.
\end{equation}
If $\{x_{n}\}_{n=0}^\infty$ is a Cauchy sequence, for a given
$\epsilon>0$, there is a natural number $N(\epsilon)$ such that
for all $n,m\geq{N(\epsilon)}$, the inequality (\ref{30}) is true.
It follows that $(F(x_{n}-x_{m})h,h)\leq\epsilon({I_{H}}h,h)$ for
all $n,m\geq{N(\epsilon)}$. By equation (\ref{29}),
$\norm{F(x_{n}-x_{m})}\leq\epsilon$ for all
$n,m\geq{N(\epsilon)}$. \vskip0.1cm

$(b)\Rightarrow(a)$ If $(b)$ is true, by equation (\ref{29}),
\[(F(x_{n}-x_{m})h,h)\leq\epsilon({I_{H}}h,h),\]
for all $n,m\geq{N(\epsilon)}$ and $\norm{h}=1$. It follows that
$F(x_{n}-x_{m})\leq\epsilon{I_{H}}$ for all
$n,m\geq{N(\epsilon)}$.

\end{proof}

\begin{defn}
(a) $X$ is said to be \emph{complete with respect to }$F$ if every
Cauchy sequence (with respect to $F$) $\{x_{n}\}_{n=0}^{\infty}$
in $X$ converges to an element $x$ of $X$ with respect to the norm
metric defined by $F$, that is, for a given $\epsilon>0$, there is
a natural number $N(\epsilon)$ such that
\[{F(x_{n}-x)}\leq\epsilon{I_{H}}\] for any $n\geq{N(\epsilon)}$.

(b) A complex linear space $X$ with an $L(H)$-valued norm
$F:X\rightarrow{L(H)}$ such that $X$ is complete with respect to
$F$ is called a \emph{Banach space with respect to }$F$.

\end{defn}

By the same proof as Proposition \ref{31}, we have the following
useful result.

\begin{thm}\label{33}
Let $\{x_{n}\}_{n=0}^\infty$ be a sequence in an $L(H)$-valued
normed linear space $X$. Then the following statements are
equivalent;

(a) The sequence $\{x_{n}\}_{n=0}^{\infty}$ converges to an
element $x$ of $X$ with respect to the norm metric defined by $F$.

(b) For a given $\epsilon>0$, there is a natural number
$N(\epsilon)$ such that
\[\norm{F(x_{n}-x)}\leq\epsilon\] for any $n\geq{N(\epsilon)}$.

\end{thm}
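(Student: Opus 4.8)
The plan is to mirror the proof of Proposition \ref{31}, since statement (a) here is exactly the convergence condition $F(x_n-x)\le\epsilon I_H$ obtained by replacing the difference $x_n-x_m$ of Proposition \ref{31} with $x_n-x$. The single tool that drives both implications is the positivity identity (\ref{29}): because $F(z)\ge 0$ for every $z\in X$, we have $\norm{F(z)}=\sup\{(F(z)h,h):h\in H,\ \norm{h}=1\}$.

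For $(a)\Rightarrow(b)$, I would fix $\epsilon>0$ and take the $N(\epsilon)$ supplied by (a), so that $F(x_n-x)\le\epsilon I_H$ for all $n\ge N(\epsilon)$. Pairing this operator inequality against a unit vector $h$ gives $(F(x_n-x)h,h)\le\epsilon(I_Hh,h)=\epsilon$. Taking the supremum over all $h$ with $\norm{h}=1$ and invoking (\ref{29}) yields $\norm{F(x_n-x)}\le\epsilon$ for all $n\ge N(\epsilon)$, which is exactly (b).

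For $(b)\Rightarrow(a)$, I would reverse the chain. Given $\epsilon>0$ and the $N(\epsilon)$ from (b), identity (\ref{29}) shows $(F(x_n-x)h,h)\le\norm{F(x_n-x)}\le\epsilon$ for every unit vector $h$ and all $n\ge N(\epsilon)$. Rescaling an arbitrary nonzero $h\in H$ to the unit sphere then promotes this to the quadratic-form inequality $(F(x_n-x)h,h)\le\epsilon(I_Hh,h)$ for all $h$, which is the operator inequality $F(x_n-x)\le\epsilon I_H$, i.e. (a).

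The step I expect to require the most care is less an obstacle than a bookkeeping point: the passage between the operator inequality $F(x_n-x)\le\epsilon I_H$ and the scalar inequality on the quadratic form rests entirely on $F(x_n-x)$ being a positive operator, which is what guarantees that (\ref{29}) applies and that the supremum of the quadratic form over the unit sphere equals the operator norm. Once this positivity is noted, everything else is a direct transcription of the argument already given for Proposition \ref{31}.
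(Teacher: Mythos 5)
Your proposal is correct and is exactly the paper's approach: the paper simply states that Theorem \ref{33} follows ``by the same proof as Proposition \ref{31},'' and your argument is precisely that transcription, with $x_n-x_m$ replaced by $x_n-x$ and both implications driven by the identity $\norm{F(z)}=\sup\{(F(z)h,h):\norm{h}=1\}$ for the positive operator $F(z)$. Nothing is missing.
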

\begin{cor}\label{51}
Let $(X,F)$ be an $L(H)$-valued normed linear space.

If a sequence $\{x_{n}\}_{n=0}^{\infty}$ converges to an element
$x$ of $X$ with respect to the norm metric defined by $F$, then
the sequence $\{F(x_{n})\}_{n=0}^{\infty}$ of operators converges
to $F(x)$.
\end{cor}
\begin{proof}
It is clear because of Proposition \ref{56} and Theorem \ref{33}.
\end{proof}

We now discuss a direct sum of Banach spaces with a natural
operator-valued norm.

\begin{prop}
Let $F_{i}(i=1,2)$ be an $L(H)$-valued norm defined on
$X_{i}(i=1,2,$ respectively$)$. If $F_{1}+{F_{2}}$ is a function
on the algebraic direct sum $X_{1}\oplus{X_2}$ defined by
\begin{equation}\label{27}(F_{1}+{F_{2}})(x_{1}\oplus{x}_{2})=F_{1}(x_{1})+{F_{2}}(x_{2}),
\end{equation}
then $(X_{1}\oplus{X_{2}},F_{1}+F_{2})$ is an $L(H)$-valued normed
linear space.
\end{prop}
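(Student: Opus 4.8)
The plan is to verify the four defining properties of an $L(H)$-valued norm from Definition \ref{1} directly for the function $F_{1}+F_{2}$, reducing each one to the corresponding property already known for $F_{1}$ and $F_{2}$ separately. Throughout I write a typical element of $X_{1}\oplus X_{2}$ as $x_{1}\oplus x_{2}$, and I repeatedly use that the sum of two positive operators in $L(H)$ is again a positive operator.

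First I would check positivity (property (1)): for any $x_{1}\oplus x_{2}$, both $F_{1}(x_{1})$ and $F_{2}(x_{2})$ are positive by hypothesis, so their sum $(F_{1}+F_{2})(x_{1}\oplus x_{2})$ is positive. Next, for the triangle inequality (property (2)), given $x_{1}\oplus x_{2}$ and $y_{1}\oplus y_{2}$ I would apply the triangle inequalities for $F_{1}$ and $F_{2}$ coordinatewise and then regroup, obtaining $F_{1}(x_{1}+y_{1})+F_{2}(x_{2}+y_{2})\leq (F_{1}(x_{1})+F_{2}(x_{2}))+(F_{1}(y_{1})+F_{2}(y_{2}))$, which is exactly the desired inequality. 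The homogeneity property (3) follows from the same coordinatewise argument: since $F_{i}(\lambda x_{i})=|\lambda|F_{i}(x_{i})$ for each $i$, one simply factors $|\lambda|$ out of the sum.

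The only step requiring a little care is property (4), the definiteness condition. Here I would use the fact that a sum of two positive operators can vanish only if each summand vanishes: if $F_{1}(x_{1})+F_{2}(x_{2})=0$, then for every $h\in H$ we have $(F_{1}(x_{1})h,h)+(F_{2}(x_{2})h,h)=0$ with both terms nonnegative, which forces $(F_{1}(x_{1})h,h)=(F_{2}(x_{2})h,h)=0$ for all $h$, hence $F_{1}(x_{1})=F_{2}(x_{2})=0$. Since $F_{1}$ and $F_{2}$ are $L(H)$-valued norms, this yields $x_{1}=0$ and $x_{2}=0$, that is $x_{1}\oplus x_{2}=0$; the converse direction is immediate. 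This positivity-based separation is the main (and essentially the only) obstacle, and once it is in place all four axioms of Definition \ref{1} are satisfied, so $(X_{1}\oplus X_{2},F_{1}+F_{2})$ is an $L(H)$-valued normed linear space.
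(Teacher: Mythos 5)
Your proposal is correct and follows essentially the same route as the paper's proof: verify the four axioms coordinatewise, with the only nontrivial point being that a sum of two positive operators vanishes only if both summands vanish. The paper states this last step without justification, whereas you spell out the quadratic-form argument, which is a welcome (but not substantively different) addition.
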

\begin{proof}

Clearly, $(F_{1}+F_{2})(x_{1}\oplus{x_{2}})\geq{0}$ for any
$x_{1}\oplus{x_{2}}\in{X_{1}\oplus{X_{2}}}$.

For $x_{1}\oplus{x_{2}}, \texttt{
}y_{1}\oplus{y_{2}}\in{X_{1}\oplus{X_{2}}}$,
$(F_{1}+F_{2})(x_{1}\oplus{x_{2}}+y_{1}\oplus{y_{2}})=F_{1}({x_{1}}+y_{1})+F_{2}(x_{2}+y_{2})\leq
{F_{1}({x_{1}})+F_{1}(y_{1})+F_{2}(x_{2})+F_{2}(y_{2})}=(F_{1}+F_{2})(x_{1}\oplus{x_{2}})+(F_{1}+F_{2})
(y_{1}\oplus{y_{2}})$, that is, triangle inequality holds.

Next,
$(F_{1}+F_{2})(\lambda(x_{1}\oplus{x_{2}}))=|\lambda|(F_{1}+F_{2})(x_{1}\oplus{x_{2}})$
for any $\lambda\in{\C}$.

Finally, $(F_{1}+F_{2})(x_{1}\oplus{x_{2}})=0$ if and only if
$x_{1}=x_{2}=0$ if and only if $x_{1}\oplus{x_{2}}=0$.

\end{proof}

\begin{thm}\label{58}
Let $X_{i}(i=1,2)$ be a Banach space with respect to
$F_{i}:X_{i}\rightarrow{L(H)}$. Then $X_{1}\oplus{X_2}$ is a
Banach space with respect to $F_{1}+{F_{2}}$.

\end{thm}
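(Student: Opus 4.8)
The plan is to show that $X_1 \oplus X_2$ is complete with respect to $F_1 + F_2$, since we already know from the preceding proposition that $(X_1 \oplus X_2, F_1 + F_2)$ is an $L(H)$-valued normed linear space. So the entire task reduces to taking an arbitrary Cauchy sequence with respect to $F_1 + F_2$ and producing a limit in $X_1 \oplus X_2$. First I would write a typical term of such a sequence as $z_n = x_n \oplus y_n$ with $x_n \in X_1$ and $y_n \in X_2$, and unpack what the Cauchy condition means componentwise.

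The key observation is positivity. Since $F_1(x_n - x_m)$ and $F_2(y_n - y_m)$ are both positive operators, we have $0 \leq F_1(x_n - x_m) \leq F_1(x_n - x_m) + F_2(y_n - y_m) = (F_1 + F_2)(z_n - z_m)$, and similarly for the second component. Thus whenever $(F_1 + F_2)(z_n - z_m) \leq \epsilon I_H$, the same inequality is inherited by each summand: $F_1(x_n - x_m) \leq \epsilon I_H$ and $F_2(y_n - y_m) \leq \epsilon I_H$. This shows that $\{x_n\}$ is Cauchy with respect to $F_1$ and $\{y_n\}$ is Cauchy with respect to $F_2$. I would state this as the central step, perhaps invoking the characterization in Proposition \ref{31} (via the operator norm) if a cleaner bookkeeping is desired, but the direct positivity argument above is self-contained.

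Now I would use completeness of each factor: since $X_1$ is a Banach space with respect to $F_1$, the sequence $\{x_n\}$ converges to some $x \in X_1$ with respect to the norm metric defined by $F_1$, and likewise $\{y_n\}$ converges to some $y \in X_2$. Setting $z = x \oplus y \in X_1 \oplus X_2$, the final step is to verify that $z_n \to z$ with respect to $F_1 + F_2$. By definition of the norm $F_1 + F_2$ we have $(F_1 + F_2)(z_n - z) = F_1(x_n - x) + F_2(y_n - y)$, and for $n$ large enough each summand is dominated by $\frac{\epsilon}{2} I_H$, so their sum is dominated by $\epsilon I_H$. This completes the proof of convergence in $X_1 \oplus X_2$.

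I do not anticipate a genuine obstacle here; the argument is routine once the right viewpoint is fixed. The one place to be careful is the passage from the joint estimate $(F_1 + F_2)(z_n - z_m) \leq \epsilon I_H$ down to the componentwise estimates, which relies essentially on the fact that dropping a positive operator from a sum can only decrease it in the operator order. If I wanted to avoid even the appearance of subtlety, I would route everything through Proposition \ref{31} so that Cauchyness and convergence are both phrased in terms of the scalar quantities $\norm{F_i(\cdot)}$, where the elementary inequalities $\norm{F_1(x_n - x_m)} \leq \norm{(F_1 + F_2)(z_n - z_m)}$ (again from positivity) and $\norm{(F_1+F_2)(z_n-z)} \leq \norm{F_1(x_n-x)} + \norm{F_2(y_n-y)}$ make both directions transparent.
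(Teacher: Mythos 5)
Your proposal is correct and follows essentially the same route as the paper: split the Cauchy sequence componentwise, use positivity to pass from the joint estimate $(F_1+F_2)(z_n-z_m)\leq\epsilon I_H$ to the estimates $F_i(\cdot)\leq\epsilon I_H$ on each summand, invoke completeness of each factor, and recombine the limits. You merely make explicit two steps the paper leaves implicit (the positivity justification for dropping a summand, and the $\frac{\epsilon}{2}$ argument for the final convergence), which is a welcome addition rather than a deviation.
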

\begin{proof}
Let $\{x_{n}\oplus{y_{n}}\}_{n=1}^{\infty}$ be a Cauchy sequence
in $(X_{1}\oplus{X_2},F_{1}+{F_{2}})$. Then, for a given
$\epsilon>0$, there is a natural number $N(\epsilon)$ such that
\begin{equation}\label{34}{{(F_{1}+{F_{2}})(x_{n}\oplus{y_{n}}-x_{m}\oplus{y_{m}})}}\leq\epsilon{I_{H}}\end{equation} for
any $n, m\geq{N(\epsilon)}$. Since
$(F_{1}+{F_{2}})(x_{n}\oplus{y_{n}}-x_{m}\oplus{y_{m}})=F_{1}(x_{n}-x_{m})+F_{2}(y_{n}-y_{m})
$,
\begin{equation}\label{47}{F_{1}(x_{n}-x_{m})}\leq\epsilon{I_{H}}\texttt{ and }{F_{2}(y_{n}-y_{m})}\leq\epsilon{I_{H}}\end{equation}
 for
any $n, m\geq{N(\epsilon)}$.

Thus, $\{x_{n}\}_{n=1}^{\infty}$ and $\{{y_{n}}\}_{n=1}^{\infty} $
are Cauchy sequences in $X_{1}$ and $X_{2}$, respectively. Since
$X_{i}(i=1,2)$ is a Banach space with respect to $F_{i}$, there
are $x(\in{X_{1}})$ and $y(\in{X_{2}})$ such that
$\{x_{n}\}_{n=1}^{\infty}$ and $\{{y_{n}}\}_{n=1}^{\infty} $
converges to $x$ and $y$ with respect to $F_1$ and $F_2$,
respectively. Then, clearly,
$\{x_{n}\oplus{y_{n}}\}_{n=1}^{\infty}$ converges to $x\oplus{y}$
with respect to $F_{1}+{F_{2}}$.

\end{proof}
We next define a norm on the space of linear functions from a
Banach space $(X,F)$ to $L(H)$.

\begin{defn}\label{54}

Let $X$ be a Banach space with respect to $F:X\rightarrow{L(H)}$.

(a) If $g:X\rightarrow{L(H)}$ is linear and
\begin{equation}\label{53}\norm{g}_{F}=\sup\{\frac{\norm{g(x)}}{\norm{F(x)}};x\in{X\setminus\{0\}}\}<\infty,\end{equation}
then $g$ is said to be \emph{bounded with respect to }$F$.

(b) The set $X^{*}_{L(H)}=\{g:X\rightarrow{L(H)}:g\texttt{ is
linear and } \norm{g}_{F}<\infty\}$ is said to be
$L(H)$\emph{-dual space of }$X$.

\end{defn}

Note that the equation (\ref{53}) defines a (scalar-valued) norm
on the $L(H)$-dual space $X^{*}_{L(H)}$.

Let \textbf{B} be a Banach space with respect to
$F:\textbf{B}\rightarrow{L(H)}$, and
$a\in{\textbf{B}\setminus\{0\}}$.

If $M_{a}^{\perp}=\{\phi\in{\textbf{B}^{*}_{L(H)}}:\phi(a)=0\}$,
then we want to show that there is a natural $L(H)$-valued norm,
called $F_a(\texttt{equation }(\ref{32}))$, on the quotient space
$\textbf{B}^{*}_{L(H)}/{M_{a}^{\perp}}$ making it into a Banach
space with respect to $F_a$.

\begin{prop}\label{12}
Let \textbf{B} be a Banach space with respect to
$F:\textbf{B}\rightarrow{L(H)}$, and
$a\in{\textbf{B}\setminus\{0\}}$.

If $M_{a}^{\perp}=\{\phi\in{\textbf{B}^{*}_{L(H)}}:\phi(a)=0\}$,
then $(\textbf{B}^{*}_{L(H)}/{M_{a}^{\perp}},F_{a})$ is an
$L(H)$-valued normed space where
$F_{a}:\textbf{B}^{*}_{L(H)}/{M_{a}^{\perp}}\rightarrow{L(H)}$ is
defined by
\begin{equation}\label{32}F_{a}(\varphi)=\norm{\varphi(a)}{F(a)}.
\end{equation}
\end{prop}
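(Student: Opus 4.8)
The plan is to verify the four axioms of Definition \ref{1}(i) for $F_a$ directly, after first securing that formula (\ref{32}) is well defined on the quotient. The preliminary—and conceptually most delicate—point is that an element $\varphi$ of $\textbf{B}^{*}_{L(H)}/M_{a}^{\perp}$ is a coset $g+M_{a}^{\perp}$, so the symbol $\varphi(a)$ must be read as $g(a)$ for a representative $g$. First I would check that this is unambiguous: if $g_{1}+M_{a}^{\perp}=g_{2}+M_{a}^{\perp}$, then $g_{1}-g_{2}\in{M_{a}^{\perp}}$, whence $(g_{1}-g_{2})(a)=0$ and $g_{1}(a)=g_{2}(a)$. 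Thus $\norm{\varphi(a)}$, and hence $F_{a}(\varphi)$, depends only on the coset and not on the chosen representative.

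Once well-definedness is in place, I would record the two structural facts that drive every axiom: first, $F(a)\geq{0}$ because $F$ is an $L(H)$-valued norm; and second, $F(a)\neq{0}$, since $a\neq{0}$ and property (4) of Definition \ref{1}(i) gives $F(a)=0$ if and only if $a=0$. With these at hand, positivity of $F_{a}(\varphi)=\norm{\varphi(a)}F(a)$ is immediate, being the product of the nonnegative scalar $\norm{\varphi(a)}$ with the positive operator $F(a)$. Homogeneity follows from linearity of each representative together with absolute homogeneity of the operator norm: $F_{a}(\lambda\varphi)=\norm{\lambda\varphi(a)}F(a)=|\lambda|\norm{\varphi(a)}F(a)=|\lambda|F_{a}(\varphi)$ for any $\lambda\in{\C}$.

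For the triangle inequality I would exploit that multiplication by the fixed positive operator $F(a)$ is monotone with respect to nonnegative scalars: using $(\varphi+\psi)(a)=\varphi(a)+\psi(a)$ on representatives, the scalar triangle inequality $\norm{\varphi(a)+\psi(a)}\leq\norm{\varphi(a)}+\norm{\psi(a)}$ in $L(H)$, and $F(a)\geq{0}$, I obtain $\norm{(\varphi+\psi)(a)}F(a)\leq(\norm{\varphi(a)}+\norm{\psi(a)})F(a)$, which is exactly $F_{a}(\varphi+\psi)\leq{F_{a}(\varphi)+F_{a}(\psi)}$. Finally, the definiteness axiom is where the hypothesis $a\neq{0}$ is essential: since $F(a)$ is a nonzero positive operator, $F_{a}(\varphi)=\norm{\varphi(a)}F(a)=0$ forces $\norm{\varphi(a)}=0$, i.e. $\varphi(a)=0$, which says precisely that every representative of $\varphi$ lies in $M_{a}^{\perp}$, so that $\varphi=0$ in the quotient; the converse is clear. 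I expect the only genuine subtleties to be the well-definedness step and this last equivalence, both of which hinge on the definition of $M_{a}^{\perp}$ and on $F(a)\neq{0}$, while the remaining axioms are routine transfers of the scalar norm inequalities through the order-preserving multiplication by $F(a)$.
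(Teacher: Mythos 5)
Your proposal is correct and follows essentially the same route as the paper: check well-definedness on the quotient via $\phi(a)=\psi(a)$ for equal cosets, then verify the four axioms directly, with the definiteness step resting on $F(a)\neq 0$ because $a\neq 0$. Your treatment is if anything slightly more explicit than the paper's at the well-definedness and definiteness steps, but there is no substantive difference in method.
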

\begin{proof}
If $\phi=\psi\in{\textbf{B}^{*}_{L(H)}/{M_{a}^{\perp}}}$, then
$\phi(a)=\psi(a)$. Thus, $F_{a}$ is well-defined.
 Clearly,
$F_{a}(\varphi)\geq{0}$ for any
$\varphi\in{\textbf{B}^{*}_{L(H)}/{M_{a}^{\perp}}}$, and
$F_{a}(\lambda\varphi)=|\lambda|F_{a}(\varphi)$ for any
$\lambda\in\C$. For $\varphi$ and $\psi$ in
$\textbf{B}^{*}_{L(H)}/{M_{a}^{\perp}}$,
\begin{center}$F_{a}(\varphi+\psi)\leq\norm{\varphi(a)}F(a)+\norm{\psi(a)}F(a)=
F_{a}(\varphi)+F_{a}(\psi)$.\end{center}

Finally, if $F_{a}(\varphi)=0$, then $\varphi(a)=0$ or $F(a)=0$.
If $\varphi(a)=0$, then $\varphi\equiv{0}$ in
$\textbf{B}^{*}/{M_{a}^{\perp}}$. Since $F$ is an $L(H)$-valued
norm, if $F(a)=0$, then $a=0$ which contradict our assumption.
Conversely, if $\varphi=0$ in $\textbf{B}^{*}/{M_{a}^{\perp}}$,
then clearly, $F_{a}(\varphi)=0$.

\end{proof}

\begin{thm}\label{57}

In the same assumption as Proposition \ref{12},
$\textbf{B}^{*}_{L(H)}/{M_{a}^{\perp}}$ is a Banach space with
respect to the operator-valued norm
$F_{a}:\textbf{B}^{*}_{L(H)}/{M_{a}^{\perp}}\rightarrow{L(H)}$.
\end{thm}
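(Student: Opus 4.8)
The plan is to reduce completeness with respect to the operator-valued norm $F_a$ to ordinary completeness of $L(H)$. The essential observation is that, by Proposition \ref{31} and Theorem \ref{33}, both the Cauchy property and convergence with respect to $F_a$ are governed by the scalar quantity $\norm{F_a(\varphi)}=\norm{\varphi(a)}\,\norm{F(a)}$. Since $a\neq 0$ and $F$ is an $L(H)$-valued norm, $F(a)\neq 0$, so $\norm{F(a)}>0$ is a fixed positive constant. Consequently a sequence $\{\varphi_n\}$ in $\mathbf{B}^{*}_{L(H)}/M_{a}^{\perp}$ is $F_a$-Cauchy if and only if $\{\varphi_n(a)\}$ is Cauchy in $L(H)$, and $\varphi_n\to\varphi$ with respect to $F_a$ if and only if $\varphi_n(a)\to\varphi(a)$ in $L(H)$. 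I would first record that the evaluation map $E\colon \mathbf{B}^{*}_{L(H)}/M_{a}^{\perp}\to L(H)$, $E(\varphi)=\varphi(a)$, is well defined (exactly as in Proposition \ref{12}) and linear, and that, up to the constant $\norm{F(a)}$, it carries the $F_a$-metric onto the metric of $L(H)$.

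Given an $F_a$-Cauchy sequence $\{\varphi_n\}$, the sequence $\{E(\varphi_n)\}$ is then Cauchy in $L(H)$; since $L(H)$ is a Banach space, it converges to some $T\in L(H)$. To finish, it suffices to produce $\varphi\in\mathbf{B}^{*}_{L(H)}/M_{a}^{\perp}$ with $E(\varphi)=T$, for then $\norm{F_a(\varphi_n-\varphi)}=\norm{\varphi_n(a)-T}\,\norm{F(a)}\to 0$, and Theorem \ref{33} gives $\varphi_n\to\varphi$ with respect to $F_a$. In other words, the entire theorem comes down to showing that $E$ is surjective onto $L(H)$ (equivalently, that its range is closed).

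I would establish surjectivity by a Hahn--Banach argument. First note that $p(x)=\norm{F(x)}$ is a genuine scalar norm on $\mathbf{B}$: subadditivity is Proposition \ref{56}(1), homogeneity follows from property (3) of Definition \ref{1} together with homogeneity of the operator norm, and $p(x)=0$ forces $F(x)=0$, hence $x=0$. On the line $\C a$ define the scalar functional $f(\lambda a)=\lambda\norm{F(a)}$, which satisfies $|f(\lambda a)|=p(\lambda a)$. By the complex Hahn--Banach theorem there is a linear $\tilde f\colon\mathbf{B}\to\C$ with $\tilde f(a)=\norm{F(a)}$ and $|\tilde f(x)|\le p(x)=\norm{F(x)}$ for all $x$. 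Then, given $T\in L(H)$, set
\[
\varphi(x)=\frac{\tilde f(x)}{\norm{F(a)}}\,T .
\]
This $\varphi$ is linear, $\varphi(a)=T$, and $\norm{\varphi(x)}\le \norm{F(x)}\,\norm{T}/\norm{F(a)}$, so $\norm{\varphi}_{F}\le \norm{T}/\norm{F(a)}<\infty$; hence $\varphi\in\mathbf{B}^{*}_{L(H)}$, and its class in the quotient satisfies $E(\varphi)=T$. This shows $E$ is onto and completes the argument.

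The main obstacle is precisely this surjectivity step: one must exhibit, for an arbitrary target $T\in L(H)$, an $F$-bounded $L(H)$-valued linear functional taking the value $T$ at $a$. The two facts that make it work are that $\norm{F(\,\cdot\,)}$ is an honest scalar norm on $\mathbf{B}$ (so scalar Hahn--Banach applies), and that multiplying a scalar functional by the fixed operator $T$ yields an $L(H)$-valued functional whose $F$-bound is controlled by $\norm{T}$.
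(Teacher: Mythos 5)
Your proof is correct, and its core reduction is exactly the one in the paper: an $F_a$-Cauchy sequence $\{\varphi_n\}$ has $\{\varphi_n(a)\}$ Cauchy in $L(H)$ because $\norm{F_a(\varphi_n-\varphi_m)}=\norm{\varphi_n(a)-\varphi_m(a)}\,\norm{F(a)}$ with $\norm{F(a)}>0$, so $\varphi_n(a)\to T$ for some $T\in L(H)$, and everything hinges on finding $\varphi\in\textbf{B}^{*}_{L(H)}$ with $\varphi(a)=T$. The difference is that the paper simply asserts ``there exists $\varphi\in{B^{*}_{L(H)}}$ such that $\varphi(a)=T_{a}$'' with no justification, whereas you actually prove this surjectivity of the evaluation map: you observe that $p(x)=\norm{F(x)}$ is a genuine scalar norm on $\textbf{B}$ (subadditivity from Proposition \ref{56}(1), homogeneity from Definition \ref{1}(3)), use the complex Hahn--Banach theorem to extend $f(\lambda a)=\lambda\norm{F(a)}$ from $\C a$ to a functional $\tilde f$ dominated by $p$, and set $\varphi(x)=\tilde f(x)T/\norm{F(a)}$, which is linear, satisfies $\varphi(a)=T$, and has $\norm{\varphi}_{F}\le\norm{T}/\norm{F(a)}<\infty$ so that it genuinely lies in $\textbf{B}^{*}_{L(H)}$. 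Your version is therefore the same argument made complete: it closes the one real gap in the paper's proof, at the modest cost of invoking Hahn--Banach.
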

\begin{proof}
Let $\epsilon>0$ be given and $\{\varphi_{n}\}_{n=0}^\infty$ be a
Cauchy sequence with respect to $F_{a}$ in
$\textbf{B}^{*}_{L(H)}/{M_{a}^{\perp}}$.

Then, there is a natural number $N(\epsilon)$ such that for any
$n,m\geq{N(\epsilon)}$,
\begin{equation}\label{35}\norm{F_{a}(\varphi_{n}-\varphi_{m})}_{L(H)}=
\norm{\varphi_{n}(a)-\varphi_{m}(a)}\norm{F(a)}_{L(H)}\leq\epsilon.\end{equation}
Since $F(a)\neq{0}$ (Note that $a\neq{0}$), equation (\ref{35})
implies that there is an operator $T_{a}\in{L(H)}$ such that
$\lim_{n\rightarrow\infty}\varphi_{n}(a)=T_{a}$. Since there
exists $\varphi\in{B^{*}_{L(H)}}$ such that $\varphi(a)=T_{a}$,
\begin{equation}\label{36}\lim_{n\rightarrow\infty}F_{a}(\varphi_{n}-\varphi)=
\lim_{n\rightarrow\infty}\norm{\varphi_{n}(a)-\varphi(a)}F(a)=0.
\end{equation}
Thus, the Cauchy sequence $\{\varphi_{n}\}_{n=0}^\infty$ in
$\textbf{B}^{*}_{L(H)}/{M_{a}^{\perp}}$ converges to an element
$\varphi(\in{\textbf{B}^{*}_{L(H)}/{M_{a}^{\perp}}})$ with respect
to the norm metric defined by $F_{a}$.

\end{proof}

\section{Hilbert Spaces with respect to $L(H)$-Valued Inner Product }
\subsection{An $L(H)$-Valued Inner Product}

In \cite{P}, W. L. Paschke introduced an operator-valued inner
product. Recall that for any operator $T$ in $L(H)$, the adjoint
of $T$, denoted $T^*$ is the unique operator on $H$ satisfying
$(Tf,g)=(f,T^{*}g)$ for $f$ and $g$ in $H$.

\begin{defn}
An $L(H)$\emph{-valued inner product} on a complex linear space
$X$ is a conjugate-bilinear map
$\varphi:X\times{X}\rightarrow{L(H)}$ such that:\vskip0.2cm

(1) $\varphi(x,x)\geq{0}$ for any $x\in{X};$

(2) $\varphi(x,x)=0$ if and only if $x=0;$

(3) $\varphi(x,y)=\varphi(y,x)^{*}$ for any $x,y\in{X}$.

\end{defn}


\subsection{Hilbert Space with respect to an $L(H)$-Valued Inner Product }

As scalar-valued cases, we provide the notion of a Hilbert space
with respect to an $L(H)$-valued inner product.

Let $X$ be a complex linear space with an $L(H)$-valued inner
product $\varphi:X\times{X}\rightarrow{L(H)}$. Then a function
$F_{\varphi}:X\rightarrow{L(H)}$, defined by
\begin{equation}\label{37} F_{\varphi}(x)=[\varphi(x,x)]^{1/2}\texttt{ for }x\in{X},
\end{equation} could not be an $L(H)$-valued norm. Thus, we need an
assumption to provide a notion of a \emph{Hilbert Space with
respect to }an $L(H)$\emph{-Valued Inner Product}.

\begin{defn}
Let $X$ be a complex linear space with an $L(H)$-valued inner
product $\varphi:X\times{X}\rightarrow{L(H)}$. If a function
$F_{\varphi}:X\rightarrow{L(H)}$ defined by
$F_{\varphi}(x)=\varphi(x,x)^{1/2}(x\in{X})$ is an $L(H)$-valued
norm, and $X$ is complete with respect to $F_{\varphi}$, then $X$
is called a \emph{Hilbert space with respect to }$\varphi$.

\end{defn}
As an example, every Hilbert $L(H)$-module (\cite{C}) is a Hilbert
space with respect to an $L(H)$-valued inner product. In this
paper, we discuss about Hilbert spaces with respect to an
$L(H)$-valued inner product, instead of Hilbert modules.

The Cauchy-Schwarz inequality is useful in the study of
(scalar-valued) inner product spaces. To extend the version of
Cauchy-Schwarz inequality, we define the notion of \emph{bounded}
$L(H)$\emph{-valued inner product}.
\begin{defn}
Let $\varphi:X\times{X}\rightarrow{L(H)}$ be an $L(H)$-valued
inner product, and $F_{\varphi}:X\rightarrow{L(H)}$ be an
$L(H)$-valued norm defined by equation (\ref{37}).

(a) If there exists a positive number $M$ such that for any
$x,y\in{X}$,
\begin{equation}\label{52}\norm{\varphi(x,y)}\leq{M}{\norm{F_{\varphi}(x)}}{\norm{F_{\varphi}(y)}},\end{equation} then
$\varphi$ is said to be a \emph{bounded} $L(H)$\emph{-valued inner
product}.

(b) If $\varphi$ is bounded, then
$\norm{\varphi}_{F_{\varphi}}=\sup\{\frac{\norm{\varphi(x,y)}}{\norm{F_{\varphi}(x)}\norm{F_{\varphi}(y)}}:x(\neq{0}),y(\neq{0})\in{X}\}$
is said to be a \emph{norm of} $\varphi$ \emph{with respect to}
$F_\varphi$.

\end{defn}
Since $F_{\varphi}(x)\geq{0}$,
$\norm{F_{\varphi}(x)^{2}}=\norm{F_{\varphi}(x)}^2=\norm{\varphi(x,x)}$.
It follows the following lemma;
\begin{lem}
If $(X,\varphi)$ is a bounded $L(H)$-valued inner product space,
then $\norm{\varphi}_{F_{\varphi}}\geq{1}$.
\end{lem}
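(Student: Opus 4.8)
The plan is to prove the inequality by exhibiting a single pair of vectors for which the quotient in the definition of $\norm{\varphi}_{F_{\varphi}}$ already equals $1$; since the norm is defined as a supremum over all nonzero pairs, this immediately forces $\norm{\varphi}_{F_{\varphi}}\geq 1$. First I would fix any nonzero $x\in X$, which is available because $X$ is a nontrivial linear space, and I would feed the diagonal pair $(x,x)$ into the supremum. Because $F_{\varphi}$ is an $L(H)$-valued norm, property (4) of Definition \ref{1} guarantees $F_{\varphi}(x)\neq 0$, hence $\norm{F_{\varphi}(x)}>0$ and the quotient $\norm{\varphi(x,x)}/(\norm{F_{\varphi}(x)}\norm{F_{\varphi}(x)})$ is well defined.

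The key step is to invoke the identity
\[
\norm{\varphi(x,x)}=\norm{F_{\varphi}(x)}^{2},
\]
which is exactly the observation recorded just before the lemma: since $F_{\varphi}(x)=\varphi(x,x)^{1/2}$ is a positive operator, its square is $\varphi(x,x)$, and the $C^{*}$-norm identity $\norm{T^{*}T}=\norm{T}^{2}$ gives $\norm{F_{\varphi}(x)^{2}}=\norm{F_{\varphi}(x)}^{2}$. Substituting this into the quotient for the pair $(x,x)$, the expression collapses to
\[
\frac{\norm{\varphi(x,x)}}{\norm{F_{\varphi}(x)}\,\norm{F_{\varphi}(x)}}=\frac{\norm{F_{\varphi}(x)}^{2}}{\norm{F_{\varphi}(x)}^{2}}=1.
\]

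Finally I would conclude: the quantity $\norm{\varphi}_{F_{\varphi}}$ is the supremum of these quotients over all nonzero $x,y\in X$, and I have produced one admissible pair for which the quotient equals $1$, so the supremum is at least $1$. There is no genuine obstacle here; the only points requiring care are the two nondegeneracy facts—that a nonzero $x$ exists and that $\norm{F_{\varphi}(x)}>0$—both of which follow directly from $X$ being nontrivial and from $F_{\varphi}$ being an $L(H)$-valued norm, so that the test pair $(x,x)$ is legitimate in the supremum.
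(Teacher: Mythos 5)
Your proof is correct and is essentially the paper's own argument: the paper proves the lemma by the one-line observation preceding it, namely that $\norm{F_{\varphi}(x)^{2}}=\norm{F_{\varphi}(x)}^{2}=\norm{\varphi(x,x)}$ for positive $F_{\varphi}(x)$, so the diagonal quotient equals $1$ and the supremum is at least $1$. Your additional care about the existence of a nonzero $x$ and the nonvanishing of $\norm{F_{\varphi}(x)}$ is a harmless (and welcome) elaboration of the same idea.
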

For given sequences $\{x_{n}\}_{n=0}^{\infty}$ and
$\{y_{n}\}_{n=0}^{\infty}$ in an $L(H)$-valued inner product space
$(X,\varphi)$ with limits $x$ and $y$ (with respect to the norm
metric defined by $F_{\varphi}$ defined in equation (\ref{37})),
by Corollary \ref{51},
$\sup\{\norm{F_{\varphi}(x_{n})}:n=0,1,2,\cdot\cdot\cdot\}$ and
$\sup\{\norm{F_{\varphi}(y_{n})}:n=0,1,2,\cdot\cdot\cdot\}$ are
finite. Thus, if $\varphi$ is bounded, then, by Theorem \ref{33}
and (\ref{52}),
$\lim_{n\rightarrow\infty}\varphi(x_{n},y_{n})=\varphi(x,y)$.

In Definition \ref{54}, we provided the notion of $L(H)$-dual
spaces of a Banach space $X$ with respect to an operator-valued
norm.  We now introduce an example of an element in
$X^{*}_{L(H)}$. Let $(X,\varphi)$ be a bounded $L(H)$-valued inner
product space, and $\varphi_{y}:X\rightarrow{L(H)}$ be a linear
map defined by $\varphi_{y}(x)=\varphi(x,y)$. Then
$\norm{\varphi_{y}}_{F_{\varphi}}=\sup\{\frac{\norm{\varphi(x,y)}}{\norm{F_{\varphi}(x)}}:
x\in{X\setminus\{0\}}\}\leq{\norm{\varphi}_{F}\norm{F_{\varphi}(y)}}$,
that is, $\norm{\varphi_{y}}_{F}$ is finite.

\subsection{Examples of Hilbert Spaces with respect to Operator-Valued Inner Products.}
In this section, we provide several examples of Hilbert Spaces
with respect to Operator-Valued Inner Products.

We state the definition of $L(H)$\emph{-dual with respect to}
$L(H)$\emph{-valued inner product}, when a Hilbert space $X$ with
respect to $\varphi:X\times{X}\rightarrow{L(H)}$ is also a
(scalar-valued) Hilbert space.

 \begin{defn}

Let $H$ be a Hilbert space with respect to an $L(H)$-valued inner
product $\varphi:H\times{H}\rightarrow{L(H)}$.


 The set $H^{*}_{L(H)}$ consisting of a linear map $g:H\rightarrow{L(H)}$ such
 that

 (a) $ \norm{g}_{F_{\varphi}}<\infty$,  and

 (b) $\phi(h)\varphi(x,y)=\varphi(\phi(h)x,y)$,

  is said to be $L(H)$\emph{-dual space of }$H$ with respect to
  $\varphi$.

\end{defn}
 It will be denoted by $(H^{*}_{L(H)},\varphi)$.

By the Riesze representation theorem, we can identify the space
$\C$ of complex numbers with $L(\C)$, by the map
$g\rightarrow\psi_{g}$ for $g$ in $\C$, where
$\psi_{g}:\C\rightarrow\C$ is a linear function defined by
\begin{equation}\label{24}\psi_{g}(f)=(f,g).\end{equation}

If $\varphi:\C\times{\C}\rightarrow{L(\C)}$ is a
conjugate-bilinear map defined by
\begin{equation}\label{26}
\varphi(a,b)=\psi_{a\overline{b}}
\end{equation}
for $a$ and $b$ in $\C$. The space $\C$ of complex numbers is a
Hilbert space with respect to $\varphi$ defined in equation
(\ref{26}). We can see easily that the $L(\C)$-dual space of $\C$
with respect to
  $\varphi$ is $L(\C)$. As the first example,
\begin{thm}\label{59}
Let $H$ be a Hilbert space, and
$\phi:H\times{H}\rightarrow{L(\C)}$ be a conjugate-bilinear map
defined by
\begin{equation}\label{2}
\phi(a,b)=\psi_{(a,{b})}
\end{equation}
for $a$ and $b$ in $H$.

Then, $H$ is a Hilbert space with respect to $\phi$.

\end{thm}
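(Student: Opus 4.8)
The plan is to reduce everything to the standard Hilbert space structure of $H$ by identifying $L(\C)$ with $\C$ through multiplication operators. For $c\in\C$ write $M_c\in L(\C)$ for the operator $M_c(f)=cf$; then the Riesz vector $\psi_g$ of equation (\ref{24}) is exactly $M_{\overline{g}}$, since $\psi_g(f)=(f,g)=f\overline{g}$. Consequently $\phi(a,b)=\psi_{(a,b)}=M_{\overline{(a,b)}}=M_{(b,a)}$. The first task is to record the dictionary between the order/adjoint/norm structure of $L(\C)$ and that of $\C$: for real $c$ one has $M_c\geq 0$ iff $c\geq 0$ and $M_c\leq M_d$ iff $c\leq d$; in general $M_c^{*}=M_{\overline{c}}$ and $\norm{M_c}_{L(\C)}=|c|$. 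With this dictionary in hand the three inner-product axioms are immediate: $\phi(x,x)=M_{(x,x)}=M_{\norm{x}^{2}}$ is positive because $\norm{x}^{2}\geq 0$, it vanishes iff $\norm{x}=0$ iff $x=0$, and $\phi(y,x)^{*}=M_{(x,y)}^{*}=M_{\overline{(x,y)}}=M_{(y,x)}=\phi(x,y)$, so property (3) holds. Conjugate-bilinearity of $\phi$ follows from the sesquilinearity of $(\cdot,\cdot)$ together with the conjugate-linearity of $g\mapsto\psi_g$.

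Next I would compute the induced norm. Since $\phi(x,x)=M_{\norm{x}^{2}}$ is multiplication by the nonnegative real number $\norm{x}^{2}$, its unique positive square root is $M_{\norm{x}}$, so $F_{\phi}(x)=\phi(x,x)^{1/2}=M_{\norm{x}}$. Each defining property of an $L(\C)$-valued norm then transfers directly from the ordinary norm of $H$ via the dictionary above: positivity and definiteness from $\norm{x}\geq 0$ and $\norm{x}=0\Leftrightarrow x=0$; homogeneity from $\norm{\lambda x}=|\lambda|\,\norm{x}$; and the triangle inequality $F_{\phi}(x+y)\leq F_{\phi}(x)+F_{\phi}(y)$ from $\norm{x+y}\leq\norm{x}+\norm{y}$ together with $M_a\leq M_b\Leftrightarrow a\leq b$. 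Thus $F_{\phi}$ is genuinely an $L(\C)$-valued norm, so $(H,\phi)$ is an $L(\C)$-valued inner product space.

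It remains to verify completeness with respect to $F_{\phi}$. Here the key observation is that $\norm{F_{\phi}(x)}_{L(\C)}=\norm{M_{\norm{x}}}_{L(\C)}=\norm{x}$, so the scalar quantity that controls Cauchyness and convergence in the sense of Proposition \ref{31} and Theorem \ref{33} is exactly the ordinary norm of $H$. Concretely, $\{x_n\}$ is Cauchy with respect to $\phi$ iff $\norm{F_{\phi}(x_n-x_m)}_{L(\C)}=\norm{x_n-x_m}$ is eventually small, i.e. iff $\{x_n\}$ is Cauchy in $H$; and by Theorem \ref{33} it converges to $x$ with respect to $F_{\phi}$ iff $\norm{x_n-x}\to 0$. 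Since $H$ is complete in its ordinary norm, every $F_{\phi}$-Cauchy sequence converges to some $x\in H$ in $H$, hence with respect to $F_{\phi}$. Therefore $H$ is complete with respect to $F_{\phi}$ and is a Hilbert space with respect to $\phi$.

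I would expect no genuinely hard step: the entire content is the translation $c\leftrightarrow M_c$, and the only place demanding a little care is to confirm that the order relation on the self-adjoint part of $L(\C)$ coincides with the order on $[0,\infty)$ (so that the operator triangle inequality is equivalent to the scalar one) and that the positive operator square root of $M_{\norm{x}^{2}}$ is precisely $M_{\norm{x}}$. Once that dictionary is pinned down, all four norm axioms and the completeness statement reduce to standard facts about the Hilbert space $H$.
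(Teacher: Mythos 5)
Your proof is correct and follows essentially the same route as the paper: both identify $F_{\phi}(x)$ with multiplication by $\norm{x}_{H}$ (the paper writes this as $\psi_{|f|}$), reduce the norm axioms to the ordinary norm axioms of $H$, and obtain completeness from $\norm{F_{\phi}(x_n-x_m)}_{L(\C)}=\norm{x_n-x_m}_{H}$ together with the completeness of $H$. Your write-up is somewhat more careful than the paper's, in particular in explicitly verifying that the positive square root of $\phi(x,x)=M_{\norm{x}^{2}}$ is $M_{\norm{x}}$ and that the operator order on $L(\C)$ matches the scalar order.
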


\begin{proof}
Clearly, $\phi$ is a bounded $L(\C)$-valued inner product. We now
show that a function $F_{\phi}:H\rightarrow{L(\C)}$ is an
$L(\C)$-valued norm.

Since $F_{\phi}(f)=\psi_{|f|}$,
\begin{equation}\label{3}(\psi_{|f|}z,z)=|f||z|^{2}\geq{0}\texttt{ for any
}z\in\C.\end{equation} Clearly,
$F_{\phi}(f+g)\leq{F_{\phi}(f)+F_{\phi}(g)}$ for any $f$ and $g$
in $H$, and $F_{\phi}(\lambda{f})=|\lambda|F_{\phi}(f)$ for any
$\lambda\in{\C}$. Finally, $F_{\phi}(f)=0$ if and only if $f=0$.

Thus, $F_{\phi}$ is an $L(\C)$-valued norm.

Let a sequence $\{f_{n}\}_{n=0}^{\infty}$ in $H$ be a Cauchy
sequence with respect to $F_\phi$. Then, for a given $\epsilon>0$,
there is a natural number $N(\epsilon)$ such that for all
$n,m\geq{N(\epsilon)}$,
\begin{equation}\label{17}F_{\phi}(f_{n}-f_{m})=\psi_{|f_{n}-f_{m}|}\leq\epsilon{I_{\C}}\end{equation}
where $I_{\C}$ is the identity operator on $\C$.

Since
$\norm{\psi_{|f_{n}-f_{m}|}}=\norm{f_{n}-f_{m}}_{H}\leq\epsilon$
for all $n,m\geq{N(\epsilon)}$, $\{f_{n}\}_{n=0}^{\infty}$ is a
Cauchy sequence in the space $H$. Thus, there is an element $f$ in
$H$ and a natural number $N^{\prime}(\epsilon)$ such that
$\norm{f_{n}-f}_{H}\leq\epsilon$ if $n\geq{N^{\prime}(\epsilon)}$.

It follows that
\[\norm{F_{\phi}(f_{n}-f)}=\norm{\psi_{|f_{n}-f|}}=\norm{f_{n}-f}_{H}\leq\epsilon,\]
if $n\geq{N^{\prime}}(\epsilon)$. Therefore, the sequence
$\{f_n\}_{n=0}^{\infty}$ converges to the function $f$ in $H$ with
respect to $F_\phi$, that is, $H$ is complete with respect to
$F_\phi$.

\end{proof}

 We know that
$L^{\infty}(\mathbb{T})=L^{\infty}$ is a Banach space, but it is
not a Hilbert space. However, we prove that $L^{\infty}$ is a
Hilbert space with respect to an
$L(L^{2}(\mathbb{T}))(=L(L^{2}))$-valued inner product.

\begin{prop}\label{38}
Let $\varphi:L^{\infty}\times{L^{\infty}}\rightarrow{L(L^{2})}$ be
a function defined by
\begin{equation}\label{40}\varphi(f,g)=M_{f\overline{g}}\end{equation}
where $M_{g}:L^{2}\rightarrow{L^{2}}$ is the multiplication
operator defined by
\begin{equation}\label{46}M_{g}(f)=g\cdot{f},\end{equation}
for $f\in{L^{2}}$. Then, $\varphi$ is a bounded $L(L^{2})$-valued
inner product.

\end{prop}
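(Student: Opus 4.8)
The plan is to check that $\varphi(f,g)=M_{f\overline{g}}$ satisfies conjugate-bilinearity together with the three axioms of an $L(H)$-valued inner product for $H=L^2$, and then to verify the boundedness inequality with constant $M=1$. Every step reduces to three standard facts about multiplication operators on $L^2$: the map $g\mapsto M_g$ is linear, the adjoint satisfies $(M_\psi)^{*}=M_{\overline{\psi}}$, and the operator norm is $\norm{M_\psi}=\norm{\psi}_{\infty}$; I will also use that $M_\psi\geq 0$ exactly when $\psi\geq 0$ almost everywhere.

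First I would verify conjugate-bilinearity. Linearity in the first slot follows from $(af_1+bf_2)\overline{g}=a\,f_1\overline{g}+b\,f_2\overline{g}$ and the linearity of $g\mapsto M_g$, while conjugate-linearity in the second slot follows from $f\,\overline{ag_1+bg_2}=\overline{a}\,f\overline{g_1}+\overline{b}\,f\overline{g_2}$. Next come the three axioms. For positivity, $\varphi(f,f)=M_{|f|^2}$ and $(M_{|f|^2}h,h)=\int|f|^2|h|^2\,d\mu\geq 0$ for every $h\in L^2$. For definiteness, if $M_{|f|^2}=0$ then evaluating on the constant function $1\in L^2(\mathbb{T})$ (here the normalization $\mu(\mathbb{T})=1$ ensures $1\in L^2$) gives $|f|^2=0$ almost everywhere, hence $f=0$ in $L^\infty$, and the converse is immediate. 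For the symmetry axiom, $\varphi(g,f)^{*}=(M_{g\overline{f}})^{*}=M_{\overline{g\overline{f}}}=M_{f\overline{g}}=\varphi(f,g)$.

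It remains to establish boundedness. The key identification is $F_\varphi(f)=\varphi(f,f)^{1/2}=M_{|f|}$, which holds because $M_{|f|}$ is positive and $M_{|f|}^{2}=M_{|f|^2}=\varphi(f,f)$, so uniqueness of the positive square root forces $F_\varphi(f)=M_{|f|}$. From this, $\norm{F_\varphi(f)}=\norm{M_{|f|}}=\norm{f}_{\infty}$, and therefore $\norm{\varphi(f,g)}=\norm{M_{f\overline{g}}}=\norm{f\overline{g}}_{\infty}\leq\norm{f}_{\infty}\norm{g}_{\infty}=\norm{F_\varphi(f)}\,\norm{F_\varphi(g)}$, which is precisely the bounded-inner-product estimate with $M=1$. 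The only place I expect to slow down is the square-root step $F_\varphi(f)=M_{|f|}$, since it is the one point where I must invoke the functional calculus (uniqueness of the positive square root) rather than a bare algebraic identity; once that is in hand, the norm identity $\norm{M_\psi}=\norm{\psi}_{\infty}$ delivers the rest immediately.
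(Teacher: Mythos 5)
Your proposal is correct and follows essentially the same route as the paper: conjugate-bilinearity from linearity of $g\mapsto M_g$, positivity and definiteness of $M_{|f|^2}$, the adjoint identity $(M_{g\overline{f}})^{*}=M_{f\overline{g}}$, and the norm estimate $\norm{M_{f\overline{g}}}=\norm{f\overline{g}}_{\infty}\leq\norm{f}_{\infty}\norm{g}_{\infty}$ giving boundedness with constant $1$. You simply supply more detail than the paper at the points it dismisses as clear (testing $M_{|f|^2}=0$ against the constant function, and justifying $F_{\varphi}(f)=M_{|f|}$ via uniqueness of the positive square root, which the paper defers to the proof of Theorem \ref{22}).
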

\begin{proof}
For $\alpha_1$, $\alpha_2$ in $\C$ and $f_{i}$, $g_{i}(i=1,2)$ in
$L^{\infty}$, clearly,
\begin{equation}\label{41}
\varphi(\alpha_{1}f_{1}+\alpha_{2}f_{2},g)=\alpha_{1}\varphi(f_{1},g)+\alpha_{2}\varphi(f_{2},g)
\end{equation}
and
\begin{equation}\label{42}
\varphi(f,\alpha_{1}g_{1}+\alpha_{2}g_{2})=\overline{\alpha_{1}}\varphi(f,g_{1})+\overline{\alpha_{2}}\varphi(f,g_{2}).
\end{equation}

Since the operator $M_{|g|^{2}}$ is positive,
\begin{equation}\label{43}\varphi(g,g)\geq{0}\end{equation}
for any $g$ in $L^{\infty}$, and
\begin{equation}\label{44}\varphi(g,g)=0\texttt{ if and only if }g\equiv{0}.
\end{equation}

Since $M_{f\overline{g}}={M_{\overline{f}g}}^{*}$,
\begin{equation}\label{45}\varphi(f,g)=\varphi(g,f)^{*}.
\end{equation}

By (\ref{41}), (\ref{42}), (\ref{43}), (\ref{44}), and (\ref{45}),
$\varphi$ is an $L(L^{2})$-valued inner product.

Since
$\norm{\varphi(f,g)}=\norm{M_{f\overline{g}}}=\norm{f\overline{g}}_{\infty}\leq{\norm{f}_{\infty}}
\norm{g}_{\infty}=\norm{F(f)}\norm{F(g)}$, $\varphi$ is bounded.
In fact, $\norm{\varphi}_{F_\varphi}=1$

\end{proof}

\begin{thm}\label{22}
Let $\varphi$ be the bounded $L(L^{2})$-valued inner product
defined in Proposition \ref{38}.

Then, $L^{\infty}$ is a Hilbert space with respect to $\varphi$.
\end{thm}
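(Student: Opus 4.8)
The plan is to verify the two defining requirements for $L^{\infty}$ to be a Hilbert space with respect to $\varphi$: first, that the associated function $F_{\varphi}:L^{\infty}\rightarrow{L(L^{2})}$ given by $F_{\varphi}(f)=\varphi(f,f)^{1/2}$ is an $L(L^{2})$-valued norm, and second, that $L^{\infty}$ is complete with respect to $F_{\varphi}$. The key preliminary computation is to identify $F_{\varphi}$ explicitly. Since $\varphi(f,f)=M_{f\overline{f}}=M_{|f|^{2}}$, and the positive square root of the multiplication operator $M_{|f|^{2}}$ is $M_{|f|}$ (multiplication by the nonnegative function $|f|$), I obtain $F_{\varphi}(f)=M_{|f|}$ for every $f\in{L^{\infty}}$.

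With this identification the norm axioms follow from pointwise inequalities for $|f|$ together with the linearity and order-monotonicity of the map $g\mapsto{M_{g}}$ on real-valued symbols. Positivity holds because $|f|\geq{0}$; homogeneity follows from $M_{|\lambda f|}=|\lambda|M_{|f|}$; the triangle inequality follows from the pointwise bound $|f+g|\leq|f|+|g|$, which yields $M_{|f+g|}\leq{M_{|f|}+M_{|g|}}$; and definiteness holds because $M_{|f|}=0$ forces $|f|=0$ almost everywhere, i.e. $f=0$ in $L^{\infty}$. Thus $F_{\varphi}$ is an $L(L^{2})$-valued norm, while Proposition \ref{38} already supplies the inner-product properties of $\varphi$.

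For completeness the decisive observation is the scalar identity $\norm{F_{\varphi}(f)}=\norm{M_{|f|}}=\norm{f}_{\infty}$, since the operator norm of a multiplication operator on $L^{2}$ equals the essential supremum of its symbol. Given a sequence $\{f_{n}\}_{n=0}^{\infty}$ that is Cauchy with respect to $F_{\varphi}$, Proposition \ref{31} converts this into the statement that $\norm{F_{\varphi}(f_{n}-f_{m})}=\norm{f_{n}-f_{m}}_{\infty}$ is small for large $n,m$, so $\{f_{n}\}_{n=0}^{\infty}$ is Cauchy in the ordinary $L^{\infty}$-norm. Because $L^{\infty}$ is a Banach space, there is $f\in{L^{\infty}}$ with $\norm{f_{n}-f}_{\infty}\rightarrow{0}$, and then $\norm{F_{\varphi}(f_{n}-f)}=\norm{f_{n}-f}_{\infty}\rightarrow{0}$, so by Theorem \ref{33} the sequence converges to $f$ with respect to $F_{\varphi}$. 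This argument runs exactly parallel to the proof of Theorem \ref{59}.

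The routine parts are the verifications of the norm axioms; the only real content, and the step I would single out as the crux, is the identification $F_{\varphi}(f)=M_{|f|}$ together with the operator-norm identity $\norm{M_{|f|}}=\norm{f}_{\infty}$. Once these are in place, completeness of $L^{\infty}$ with respect to $F_{\varphi}$ is equivalent to its completeness in the usual norm, which is classical.
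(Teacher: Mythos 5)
Your proposal is correct and follows essentially the same route as the paper: identify $F_{\varphi}(f)=M_{|f|}$, use $\norm{M_{|f|}}=\norm{f}_{\infty}$ to translate $F_{\varphi}$-Cauchyness into ordinary $L^{\infty}$-Cauchyness, and invoke the classical completeness of $L^{\infty}$. The only difference is that you verify the norm axioms for $M_{|f|}$ explicitly via pointwise inequalities, whereas the paper simply refers back to its earlier example of the multiplication-operator norm; this is a harmless (and arguably welcome) elaboration.
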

\begin{proof}To start proving that $L^{\infty}$ is a Hilbert space with
respect to $\varphi$, we need to show that a function
$F_{\varphi}:L^{\infty}\rightarrow{L(L^{2})}$ defined by the same
way as equation (\ref{37}) is an $L(L^{2})$-valued norm.

Since $F_{\varphi}(f)^{2}=M_{|f|^{2}}$,
$F_{\varphi}(f)=M_{|f|}=m_{f}$ (Note equation (\ref{39})). Thus,
$F_{\varphi}$ is an $L(L^{2})$-valued norm.

Let a sequence $\{f_{n}\}_{n=0}^{\infty}$ in $L^{\infty}$ be a
Cauchy sequence with respect to $F_\varphi$. Then, for a given
$\epsilon>0$, there is a natural number $N(\epsilon)$ such that
for all $n,m\geq{N(\epsilon)}$,
\begin{equation}\label{17}F_{\varphi}(f_{n}-f_{m})=M_{|f_{n}-f_{m}|}\leq\epsilon{I_{L^{2}}}\end{equation}
where $I_{L^{2}}$ is the identity operator on $L^{2}$.

Since
$\norm{M_{|f_{n}-f_{m}|}}=\norm{f_{n}-f_{m}}_{\infty}\leq\epsilon$
for all $n,m\geq{N(\epsilon)}$, $\{f_{n}\}_{n=0}^{\infty}$ is a
Cauchy sequence in the space $L^{\infty}$. Thus, there is a
function $f$ in $L^\infty$ and a natural number
$N^{\prime}(\epsilon)$ such that
$\norm{f_{n}-f}_{\infty}\leq\epsilon$ if
$n\geq{N^{\prime}(\epsilon)}$.

It follows that
\[\norm{F_{\varphi}(f_{n}-f)}=\norm{M_{|f_{n}-f|}}=\norm{f_{n}-f}_{\infty}\leq\epsilon,\]
if $n\geq{N^{\prime}}(\epsilon)$. Therefore, the sequence
$\{f_n\}_{n=0}^{\infty}$ converges to the function $f$ in
$L^\infty$ with respect to $F_\varphi$, that is, $L^{\infty}$ is
complete with respect to $F_\varphi$.

\end{proof}

By the same proof of Theorem \ref{22}, we have the following
result.
\begin{cor}\label{21}
Let $(Y,\Omega,\mu)$ be a $\sigma$-finite measure space and
$L^{p}(Y,\Omega,\mu)=L^{p}(\mu)$ $(p=2,\infty)$.

Then, $L^{\infty}(\mu)$ is a Hilbert space with respect to a
bounded $L(L^{2}(\mu))$-valued inner product
$\varphi:Y\times{Y}\rightarrow{L(L^{2}(\mu))}$ defined by
\begin{equation}\label{19}\varphi(f,g)=M_{f\overline{g}}\end{equation}
where $M_{g}:L^{2}(\mu)\rightarrow{L^{2}(\mu)}$ is the
multiplication operator defined by
\begin{equation}\label{20}M_{g}(f)=g\cdot{f},\end{equation}
for $f\in{L^{2}}(\mu)$.

\end{cor}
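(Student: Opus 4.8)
The plan is to reproduce the proof of Theorem \ref{22} verbatim, replacing $(\mathbb{T},\mu)$ by the $\sigma$-finite space $(Y,\Omega,\mu)$. The point is that the only place where anything specific about the underlying measure entered that argument was through the identity $\norm{M_h}=\norm{h}_\infty$ relating the operator norm of a multiplication operator on $L^2$ to the essential supremum of its symbol, and this identity survives on any $\sigma$-finite measure space. So the whole proof splits into the same three pieces: $\varphi$ is a bounded $L(L^2(\mu))$-valued inner product, $F_\varphi$ is an $L(L^2(\mu))$-valued norm, and $L^\infty(\mu)$ is complete with respect to $F_\varphi$.

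First I would check, exactly as in Proposition \ref{38}, that $\varphi(f,g)=M_{f\overline{g}}$ is a well-defined bounded $L(L^2(\mu))$-valued inner product. For $f,g\in{L^\infty(\mu)}$ the product $f\overline{g}$ lies in $L^\infty(\mu)$, so $M_{f\overline{g}}$ is a bounded operator on $L^2(\mu)$; conjugate-bilinearity is immediate from the analogues of (\ref{41}) and (\ref{42}); positivity of $\varphi(g,g)=M_{|g|^2}$ and the equivalence that $\varphi(g,g)=0$ if and only if $g\equiv{0}$ follow as before; and $M_{f\overline{g}}=M_{\overline{f}g}^{\ast}$ yields $\varphi(f,g)=\varphi(g,f)^{\ast}$. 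Boundedness, with $\norm{\varphi}_{F_\varphi}=1$, comes from $\norm{\varphi(f,g)}=\norm{f\overline{g}}_\infty\leq\norm{f}_\infty\norm{g}_\infty$.

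Next I would observe that $F_\varphi(f)=[\varphi(f,f)]^{1/2}$ satisfies $F_\varphi(f)^2=M_{|f|^2}$, hence $F_\varphi(f)=M_{|f|}$, so the norm axioms for $F_\varphi$ follow from the pointwise (a.e.) properties of $|f|$. The completeness argument is then the heart of the matter and is where $\sigma$-finiteness is used. Given a sequence $\{f_n\}_{n=0}^{\infty}$ that is Cauchy with respect to $F_\varphi$, I would convert the operator inequality $F_\varphi(f_n-f_m)=M_{|f_n-f_m|}\leq\epsilon{I_{L^2(\mu)}}$ into the scalar estimate $\norm{f_n-f_m}_\infty\leq\epsilon$ by invoking $\norm{M_h}=\norm{h}_\infty$. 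This reduces $\{f_n\}$ to an ordinary Cauchy sequence in the Banach space $L^\infty(\mu)$, whose completeness furnishes a limit $f\in{L^\infty(\mu)}$; translating back through the same identity gives $\norm{F_\varphi(f_n-f)}=\norm{f_n-f}_\infty\to{0}$, so $f_n\to{f}$ with respect to $F_\varphi$. Thus $L^\infty(\mu)$ is complete with respect to $F_\varphi$ and is a Hilbert space with respect to $\varphi$.

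The only genuine obstacle, and the sole spot that actually consumes the hypothesis, is the norm identity $\norm{M_h}=\norm{h}_\infty$. The inequality $\norm{M_h}\leq\norm{h}_\infty$ is automatic. For the reverse, fix $\delta>0$ and set $E=\{|h|>\norm{h}_\infty-\delta\}$, a set of positive measure; $\sigma$-finiteness lets me choose a subset $E'$ of $E$ with $0<\mu(E')<\infty$, so that $\chi_{E'}$ is a nonzero element of $L^2(\mu)$ and $\norm{M_h\chi_{E'}}_2\geq(\norm{h}_\infty-\delta)\norm{\chi_{E'}}_2$, whence $\norm{M_h}\geq\norm{h}_\infty-\delta$ and then $\norm{M_h}\geq\norm{h}_\infty$ as $\delta\to{0}$. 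Without such a finiteness assumption the level sets witnessing a large essential supremum could carry no nonzero $L^2$ test vector, the operator norm could fall below $\norm{h}_\infty$, and the translation between the $F_\varphi$-metric and the $L^\infty$-metric would fail; this is exactly why $\sigma$-finiteness appears in the statement.
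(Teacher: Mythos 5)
Your proposal is correct and follows essentially the same route as the paper, which proves this corollary simply by citing the proof of Theorem \ref{22} verbatim. The one thing you add beyond the paper---an explicit verification that the identity $\norm{M_h}=\norm{h}_\infty$ persists on a $\sigma$-finite measure space via a finite-measure subset of the level set $\{|h|>\norm{h}_\infty-\delta\}$---is exactly the detail the paper leaves implicit, and it correctly isolates where the $\sigma$-finiteness hypothesis is consumed.
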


\begin{prop}\label{18}
Let $X$ be a Hilbert space with respect to an $L(H)$-valued inner
product $\varphi:X\times{X}\rightarrow{L(H)}$ and
$F:X\rightarrow{L(H)}$ be an operator-valued norm defined by
equation (\ref{37}).

If $B(\subset{X})$ is a ($F$-norm) closed subspace of $X$, then
$B$ is also a Hilbert space with respect to
$\varphi|{B\times{B}}$.
\end{prop}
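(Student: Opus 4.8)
The plan is to verify, one at a time, the three features that define a Hilbert space with respect to $\varphi|_{B\times B}$: that the restriction is again an $L(H)$-valued inner product, that the associated function $F_{\varphi}|_{B}$ is an $L(H)$-valued norm on $B$, and finally that $B$ is complete with respect to $F_{\varphi}|_{B}$. The first two checks will be essentially automatic, since every condition involved is pointwise or pairwise in its arguments and therefore descends to any linear subspace; the real content of the statement resides in completeness, which is precisely where the hypothesis that $B$ is $F$-norm closed enters.

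First I would observe that, as $B$ is a linear subspace of $X$, the map $\varphi|_{B\times B}$ remains conjugate-bilinear, and for $x,y\in B$ each of the requirements $\varphi(x,x)\geq 0$, $\varphi(x,x)=0\Leftrightarrow x=0$, and $\varphi(x,y)=\varphi(y,x)^{*}$ holds simply because it already holds for all elements of $X$. Hence $\varphi|_{B\times B}$ is an $L(H)$-valued inner product on $B$. Likewise, since $F_{\varphi}(x)=[\varphi(x,x)]^{1/2}$ and $F_{\varphi}$ is by hypothesis an $L(H)$-valued norm on $X$, its restriction inherits positivity, the triangle inequality, homogeneity, and definiteness on $B$; combined with the closure of $B$ under the vector-space operations, this makes $F_{\varphi}|_{B}$ an $L(H)$-valued norm on $B$, and the induced $L(H)$-valued metric on $B$ is simply the restriction of the one on $X$.

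For completeness I would take a sequence $\{x_{n}\}_{n=0}^{\infty}$ in $B$ that is Cauchy with respect to $F_{\varphi}|_{B}$. By the defining Cauchy condition, $\{x_{n}\}_{n=0}^{\infty}$ is then also Cauchy with respect to $F_{\varphi}$ when viewed as a sequence in $X$. Since $X$ is a Hilbert space with respect to $\varphi$, it is complete with respect to $F_{\varphi}$, so there is an element $x\in X$ to which $\{x_{n}\}_{n=0}^{\infty}$ converges in the $F_{\varphi}$-norm metric. Because $B$ is $F$-norm closed in $X$ and each $x_{n}$ lies in $B$, the limit $x$ must also lie in $B$. Thus every $F_{\varphi}|_{B}$-Cauchy sequence in $B$ converges to an element of $B$, so $B$ is complete with respect to $F_{\varphi}|_{B}$, and $B$ is a Hilbert space with respect to $\varphi|_{B\times B}$.

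The only step requiring genuine care is the last one, and the single point to watch is the interplay between the two notions of convergence. One should confirm, via Theorem \ref{33} (equivalently, from the definition of the $L(H)$-valued metric $d(x,y)=F_{\varphi}(x-y)$), that for a sequence lying in $B$ the property of being Cauchy, or convergent, with respect to $F_{\varphi}|_{B}$ is literally identical to being Cauchy, or convergent, with respect to $F_{\varphi}$. Once this identification is made, the closedness hypothesis delivers the limit inside $B$ and completeness follows with no further estimates.
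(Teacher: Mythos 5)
Your proposal is correct, and it fills in exactly the standard argument that the paper's own proof omits entirely (the paper simply writes ``It is clear''): the inner-product and norm axioms restrict trivially to the subspace, and completeness follows from completeness of $X$ together with $F$-norm closedness of $B$. No gaps; the one point you flag --- that Cauchyness and convergence for a sequence in $B$ are the same whether computed in $B$ or in $X$ --- is indeed the only thing worth saying out loud.
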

\begin{proof}
It is clear.
\end{proof}
\begin{thm}\label{60}
Let $(Y,\Omega,\mu)$ be a $\sigma$-finite measure space and
$L^{2}(Y,\Omega,\mu)=L^{2}(\mu)$.

Then, $C(Y)$ is a Hilbert space with respect to
$\varphi:Y\times{Y}\rightarrow{L(L^{2}(\mu))}$ defined by equation
(\ref{19}).

\end{thm}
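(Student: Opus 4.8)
The plan is to realize $C(Y)$ as a closed subspace of $L^{\infty}(\mu)$ and then invoke the subspace result Proposition \ref{18} together with Corollary \ref{21}. First I would observe that, since continuous functions on $Y$ are (essentially) bounded, $C(Y)$ sits inside $L^{\infty}(\mu)$ as a complex linear subspace, and the restriction of $\varphi(f,g)=M_{f\overline{g}}$ to $C(Y)\times C(Y)$ is exactly the bounded $L(L^{2}(\mu))$-valued inner product produced in Proposition \ref{38} and Corollary \ref{21}. In particular the associated operator-valued norm is $F_{\varphi}(f)=M_{|f|}$ with $\norm{F_{\varphi}(f)}=\norm{f}_{\infty}$, exactly as computed in Theorem \ref{22}, so $F_{\varphi}$ restricts to an $L(L^{2}(\mu))$-valued norm on $C(Y)$.

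By Corollary \ref{21}, $(L^{\infty}(\mu),\varphi)$ is a Hilbert space with respect to $\varphi$. Hence, by Proposition \ref{18}, it suffices to prove that $C(Y)$ is an $F_{\varphi}$-norm closed subspace of $L^{\infty}(\mu)$. By Theorem \ref{33}, convergence in the $F_{\varphi}$-norm metric is nothing but convergence in $\norm{\cdot}_{\infty}$, because $\norm{F_{\varphi}(f-g)}=\norm{f-g}_{\infty}$. Thus I must show that whenever a sequence $\{f_{n}\}$ in $C(Y)$ converges to some $h\in L^{\infty}(\mu)$ in $\norm{\cdot}_{\infty}$, the limit $h$ agrees almost everywhere with a continuous function.

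This reduces to the classical fact that a uniform limit of continuous functions is continuous: an $\norm{\cdot}_{\infty}$-Cauchy sequence in $C(Y)$ is uniformly Cauchy, so it converges uniformly to a continuous function $f\in C(Y)$, and then $\norm{F_{\varphi}(f_{n}-f)}=\norm{f_{n}-f}_{\infty}\to 0$. Alternatively, one can argue directly, mirroring the proof of Theorem \ref{22}: take a Cauchy sequence $\{f_{n}\}$ in $(C(Y),F_{\varphi})$, note $\norm{f_{n}-f_{m}}_{\infty}\leq\epsilon$ for large $n,m$, extract the uniform limit $f\in C(Y)$, and conclude $f_{n}\to f$ with respect to $F_{\varphi}$.

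The main obstacle is the interface between the essential-supremum norm coming from $\mu$ and the genuine uniform norm on $C(Y)$: the identity $\norm{F_{\varphi}(f)}=\norm{f}_{\infty}$ and the injectivity requirement $F_{\varphi}(f)=0\Rightarrow f=0$ both force the essential supremum of a continuous function to coincide with its uniform supremum. This holds precisely when $\mu$ has full topological support (every nonempty open set has positive measure), and I would either add this hypothesis or systematically view $C(Y)$ inside $L^{\infty}(\mu)$; once this identification is in place, completeness of $C(Y)$ in the uniform norm (for compact $Y$) delivers the required closedness and finishes the proof.
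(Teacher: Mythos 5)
Your proposal is correct and follows essentially the same route as the paper: reduce via Corollary \ref{21} and Proposition \ref{18} to showing that $C(Y)$ is an $F_{\varphi}$-norm closed subspace of $L^{\infty}(\mu)$, identify convergence in the $F_{\varphi}$-norm metric with convergence in $\norm{\cdot}_{\infty}$ through $\norm{F_{\varphi}(f_{n}-f)}=\norm{f_{n}-f}_{\infty}$, and invoke completeness of $C(Y)$ under the supremum norm. Your additional observation --- that identifying the essential supremum with the uniform supremum (and the injectivity of $C(Y)\hookrightarrow L^{\infty}(\mu)$, hence the norm axiom $F_{\varphi}(f)=0\Rightarrow f=0$) requires $\mu$ to have full topological support and implicitly that $Y$ be compact so that $C(Y)\subset L^{\infty}(\mu)$ --- identifies hypotheses the paper silently assumes, and you are right to flag them.
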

\begin{proof}

Since the operator-valued function
$F_{\varphi}:L^{\infty}(\mu)\rightarrow{L(L^{2}(\mu))}$, defined
by
\[F_{\varphi}(f)=[\varphi(f,f)]^{1/2}=M_{|f|},\]
 is an operator-valued norm, by Corollary \ref{21}, and
 Proposition \ref{18}, it is enough to prove that
$C(X)$ is a $F_{\varphi}$-norm closed subspace of
$L^{\infty}(\mu)$.

Let $\{f_{n}\}_{n=1}^\infty$ be a sequence in $C(X)$ such that
$f_{n}\rightarrow{f}$ with respect to $F_{\varphi}$-norm. Thus,
for a given $\epsilon>0$, there is a positive integer
$N(\epsilon)>0$ such that for any $n\geq{N(\epsilon)}$,
\begin{equation}\label{23}
\norm{F_{\varphi}(f_{n}-f)}<\epsilon.
\end{equation}
Since
$\norm{F_{\varphi}(f_{n}-f)}=\norm{M_{|f_{n}-f|}}=\norm{f_{n}-f}_{\infty}$,
equation (\ref{23}) implies that $f_{n}\rightarrow{f}$ with
respect to the supremum norm. Since $(C(X),\norm{\texttt{
}}_{\infty})$ is a Banach space, $f\in{C(X)}$ which proves this
Theorem.

\end{proof}

------------------------------------------------------------------------


\begin{thebibliography}{50}

\bibitem{C}
E.C. Lance, \emph{Hilbert }$C^{*}$\emph{-modules}, Cambridge
University Press, 1993.


\bibitem{D}
R.G. Douglas, \emph{Banach algebra techniques in operator theory},
N.Y., Springer, 1998.


\bibitem{K}
Yun-Su Kim, \emph{Operator-Valued Norms}, preprint, 2007.

\bibitem{P}
W. L. Paschke, \emph{Inner product modules over $B^{*}$-algebras},
Trans. Ame. Soc. 1973.



\end{thebibliography}
\end{document}